\documentclass[12pt]{amsart}

\usepackage{amsmath,mathrsfs}
\usepackage{amsthm}
\usepackage{amssymb}
\usepackage{enumerate}
\usepackage{xcolor}
\usepackage{comment}
\usepackage{tikz}
\usepackage{tikz-cd}
\usepackage{color} 
\definecolor{rossred}{rgb}{1.0,0.25,0.66}  
\definecolor{rossgreen}{rgb}{0.25,0.66,0.25} 
\definecolor{rossblue}{rgb}{0.25,0.66,1.0}
\definecolor{sashapurple}{rgb}{0.5,0.15,0.5}
\usepackage[pdfborder={0 0 0}]{hyperref}
\hypersetup{
    colorlinks=true,
    linkcolor=rossblue,
    citecolor=rossgreen
}

\numberwithin{equation}{section}
\theoremstyle{plain}
\newtheorem{theorem}{Theorem}[section]
\newtheorem{proposition}[theorem]{Proposition}
\newtheorem{lemma}[theorem]{Lemma}

\newtheorem{problem}[theorem]{Problem}
\newtheorem{corollary}[theorem]{Corollary}

\newtheorem{comments}[theorem]{Comments}

\newtheorem{notation}[theorem]{Notation}

\theoremstyle{definition}

\newtheorem{definition}[theorem]{Definition}

\newtheorem{remark}[theorem]{Remark}

\newtheorem{example}[theorem]{Example}
\AtBeginEnvironment{example}{%
  \pushQED{\qed}%
}
\AtEndEnvironment{example}{\popQED\endexample}

\usepackage{url}
\usepackage{enumitem}
\usepackage[utf8]{inputenc}
\usepackage[T1]{fontenc}
\usepackage[paperwidth = 8.5in, paperheight = 11in, inner = 1in, outer = 1in, top = 1in, bottom = 1in]{geometry}

\newcommand{\lex}{\text{lex}}

\newcommand{\ord}{\text{ord}}

\definecolor{campsc}{rgb}{0.44, 0.0, 1.0}

\title{Composite Linear Quotient Orderings of ideals and Modified Anticycles}

\author[]{Stephen Landsittel}
\address[S. Landsittel]
{Institute of Mathematics, Hebrew University, Givat Ram, Jerusalem 91904, Israel}
\email{stephen.landsittel@mail.huji.ac.il}

\keywords{linear quotients, anticycles, edge ideal}
\subjclass[2020]{13A70,13F55,05C25,05E40,05E45}
\date{\today}

\begin{document}

\begin{abstract}
In this paper we describe sufficient conditions for a power of a sum of two edge ideals to have linear quotients. We apply this result to give a class of modified anticycle graphs whose squared and cubed edge ideals have linear quotients.
\end{abstract}

\maketitle

\section{Introduction}

It is well known result of Herzog, Hibi, and Zheng, that a square-free quadric monomial ideal (i.e. an edge ideal) has a linear resolution if and only if all positive powers of that ideal have a linear resolution, this fact is proven in \cite{HHZ}. A classical problem which has been of great interest in algebra is the question of: When do sufficiently large powers of a monomial ideal $I$ have linear resolution? This question is also interesting in combinatorics, as a monomial ideal $I$ has a linear resolution if and only if the Alexander dual of its Stanley-Reisner complex is Cohen--Macaulay, which was proven in \cite{Eagon-Reiner}.

In studying this question regarding linear resolutions, the stronger property of $I$ having linear quotients was introduced in \cite{HT-lin_quot} and has been of recent interest. Villarreal and Van Tuyl discussed ideals with linear quotients in \cite{vantuyl_villrreal_shellable}. A monomial ideal $I$ has linear quotients if and only if the Alexander dual of its Stanley-Reisner complex is Shellable. Thus a question that has been of recent interest in the combinatorial algebra community is that of when a monomial ideal has linear quotients. Fr\"{o}berg proved in \cite{Froberg} that edge ideal has linear resolution if and only if it has linear quotients if and only if the corresponding graph of this edge ideal is cochordal. A well-known theorem of Dirac states that a finite graph is cochordal if and only if it is the $1$-skeleton of a quasi-tree. An algebraic proof of this fact is given in \cite{HHZ-dirac}. There is no known simple combinatorial description of when sufficiently large powers of an edge ideal have linear resolution, or even when the square of this ideal has a linear resolution, or linear quotients for that matter.

The property of a graph being gapfree has been observed to play a distinguished role in this story of linear resolution and linear quotients of edge ideals. It was observed that for a graph $G$, that if some power of its edge ideal $I_G$ has a linear resolution, then $G$ is a gapfree, see for instance \cite{Nevo-Peeva_C4}. In \cite{Nevo-Peeva_C4} they show that when $G$ is gapfree, $I_G^2$ need not have linear resolution. They conjecture that for a gapfree graph $G$, $I_G^s$ admits a linear resolution for all $s>>0$. More broadly speaking, over the recent decade, various versions of the following problem have been a substantial source of interest in algebraic combinatorics recently, which we now restate here, following a conjecture the Nevo and Peeva in \cite{Nevo-Peeva_C4}.

\begin{problem}\label{problem-main}
    Find a combinatorial characterization of the graphs $G$ for which (i) $I_G^s$ has a linear resolution for all $s>>0$; (ii) $I_G^s$ has linear quotients for all $s>>0$; (iii) $I_G^2$ has a linear resolution (or linear quotients).
\end{problem}

The general problem of finding conditions for when powers of a monomial ideal has a linear resolution, or linear quotients, has a vast history in the literature. In general it is difficult to determine whether a monomial ideal (even a small power of an edge ideal) has linear quotients. In Example 4.3 \cite{Conca2003} it is shown that the product of two monomial ideals with linear resolution need not itself have linear resolution. On the other hand, it is well known that a a graph is cochordal if and only if its corresponding edge ideal has linear quotients if and only if all powers of said ideal have linear quotients, by Fr\"{o}ger's theorem and Theorem 3.2 \cite{HHZ}.

Progress on Problem \ref{problem-main} has been made for a few classes of graphs. In \cite{BDMSM} they show that all powers at least two of edge ideals of anticycles have linear quotients. However, the matter of when powers above one, or even sufficiently large powers of edge ideals have linear quotients is largely mysterious. Some additional papers on linear quotient orderings of ideals are \cite{linquot-Jahan-Zheng} and \cite{linquot2}. With this result in mind we pose the following problem.
\begin{problem}\label{problem-2}
    Find a graph-theoretic operation $G\mapsto G'$ on connected graphs $G$ such that, if $I_G^s$ has linear quotients for $s>>0$, then $I_{G'}^s$ has linear quotients for $s>>0$.
\end{problem}
Given that Problem \ref{problem-main} is completely open, we present this Problem \ref{problem-2} with the hope in mind that over time the community can inductively expand the known families of graphs $G$ such that large enough powers of $I_G$ have linear quotients.

In this paper, we purpose two results in the direction of attacking this problem. Firstly we give a method for constructing linear quotient orderings of more complicated monomial ideals from simpler ones. More specifically we shall discuss linear quotient orderings of powers $s$ of edge ideals of graphs $H$, which are obtained using the following ingredients

(i) A subgraph $G_0$ of $H$ and a star $F_0\subset H$ which is coning over a vertex cover of $G_0$ such that $H = G_0\cup F_0$;

(ii) Linear quotient orderings of $I_{G_0}^{s-j}I_{F_0}^j$ for $j<s$.

(we are guaranteed to have an order in the $j=s$ case as a star is cochordal). We refer to this sort of construction as a \emph{composite linear quotient order} and we state the formal construction and result below.

\begin{proposition}[Construction of composite linear quotient orderings (Lemma \ref{lem-tech})]\label{propA}
    Let $G_0$ be a graph on $[n-1]$, let $F_0$ be a star graph on $[n]$, and let $s\in\mathbb{Z}_{\geq 2}$, and let $H_0$ be the graph on $[n]$ whose edge set is $E(G_0)\cup E(F_0)$. Assume the following two conditions
    \begin{enumerate}
        \item[(1)] $I_{G_0}^{s-j}I_{F_0}^{j}$ has linear quotients for $j<s$, by some ordering $\mathcal{O}_j$ of the minimal generators of $I_{G_0}^{s-j}I_{
        F_0}^j$. Let $\mathcal{O}_s$ be a linear quotient ordering of the minimal generators of $I_{F_0}^s$.
        \item[(2)] Each edge of $G_0$ is adjacent to some edge of $F_0$
    \end{enumerate}
    Then $I_{H_0}^s$ has linear quotients by the ordering
    \begin{equation*}
        \mathcal{O}
        :=
        (\mathcal{O}_s,\mathcal{O}_{s-1},\ldots,\mathcal{O}_{1},\mathcal{O}_0)
    \end{equation*}obtained by concatenating the orderings $\mathcal{O}_s,\ldots,\mathcal{O}_0$.
\end{proposition}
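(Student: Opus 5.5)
The plan is to prove the statement directly from the definition of linear quotients, grading everything by the exponent of the cone vertex. Write $x_n$ for the variable of the center of the star $F_0$. Since $G_0$ lives on $[n-1]$, $x_n$ divides no edge monomial of $G_0$ but divides every edge monomial of $F_0$. I will use condition (2) in the form: every edge $x_ax_b$ of $G_0$ has an endpoint, say $a$, that is a leaf of $F_0$, so that $x_ax_n\in I_{F_0}$. (This reading assumes $n$ is the center; if the center lay in $[n-1]$ I would adapt the argument.)

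\emph{Step 1 (the blocks partition the generators).} Every product of $s$ edges of $H_0$ has degree $2s$, so $G(I_{H_0}^s)$ is exactly the set of such products. For a generator $u$, the exponent $\deg_{x_n}u$ equals the number $j$ of $F_0$-edges in any factorization of $u$. Hence
\[
G(I_{H_0}^s)=\bigsqcup_{j=0}^{s} G\bigl(I_{G_0}^{s-j}I_{F_0}^{j}\bigr),
\]
a disjoint union indexed by $j=\deg_{x_n}u$. So the concatenation $\mathcal{O}=(\mathcal{O}_s,\ldots,\mathcal{O}_0)$ is a genuine ordering of the minimal generators, in which the block of $u$ is determined by $\deg_{x_n}u$ and blocks with larger $x_n$-degree come first.

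\emph{Step 2 (the colon ideal).} Fix $u$ in block $j$, and let $P$ be the set of its predecessors in $\mathcal{O}$. The colon ideal $(P):u$ is generated by the monomials $v/\gcd(v,u)$ for $v\in P$. Split $P=P_{\mathrm{same}}\sqcup P_{\mathrm{earlier}}$ according to whether $v$ lies in block $j$ or in a block $j'>j$.
\begin{itemize}
\item For $v\in P_{\mathrm{same}}$, hypothesis (1) says the ideal generated by these colons is generated by variables.
\item For $j=s$, the set $P_{\mathrm{earlier}}$ is empty and there is nothing more to check.
\item For $j<s$ and $v\in P_{\mathrm{earlier}}$, we have $\deg_{x_n}v>\deg_{x_n}u$, so $x_n$ divides $v/\gcd(v,u)$.
\end{itemize}
It therefore suffices to show that $x_n\in(P):u$ itself. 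Then each earlier-block colon generator is a multiple of $x_n$ and is redundant, and
\[
(P):u=(x_n)+(\text{variables from } \mathcal{O}_j),
\]
which is generated by variables.

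\emph{Step 3 (producing $x_n$).} This is the key step, and it is where condition (2) enters. Since $j<s$, any factorization of $u$ contains an edge $x_ax_b\in E(G_0)$; choose notation so that $a$ is a leaf of $F_0$. Put
\[
w=\frac{u}{x_ax_b}\,x_ax_n=\frac{u\,x_n}{x_b}.
\]
This is a product of $s-j-1$ edges of $G_0$ and $j+1$ edges of $F_0$, so $w$ lies in block $j+1$ and precedes $u$. Moreover $w/\gcd(w,u)=x_n$, because $b\neq n$. Hence $x_n\in(P):u$.

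I expect the main obstacle to be the bookkeeping in Step 1: one must justify that block membership is well defined, that is, independent of the chosen factorization. Everything else is the short argument of Steps 2 and 3.
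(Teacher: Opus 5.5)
Your proposal is correct and follows essentially the same route as the paper. Both arguments handle same-block pairs by hypothesis (1). For generators in different blocks, both produce $x_n$ with the single witness $x_nM_2/x_{b_1}$, obtained by using condition (2) to swap a $G_0$-edge $\{a_1,b_1\}$ of $M_2$ for the $F_0$-edge $\{a_1,n\}$; the only difference is that the paper phrases this through the pairwise criterion of Lemma~\ref{lem-LQ} while you compute the colon ideal directly. Your use of $\deg_{x_n}$ to show that the blocks are disjoint, and that $x_n$ divides every earlier-block colon generator, makes explicit a point the paper leaves implicit.
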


We give several examples of this machinery in action along with some applications to linear quotient orderings of some powers of certain edge ideals.

Secondly, we give a class of modifications of the anticycle graph on $n\geq 7$ vertices such that the square and cube of the corresponding edge ideal have linear quotients. Smaller orders than $7$ can be checked quickly by computer.

In particular, we give an explicit linear quotient ordering of the minimal generators of the edge ideal squared and cubed of the graph obtained from removing certain edges from the anticycle of order $n$. More precisely, we prove the following result, where $\overline{c}$ is the positive reside mod $n$ of an integer $c$.

\begin{theorem}\label{thmA}(Theorem \ref{cor1})
    Let $n\geq 7$ and let $a,b\in [n]:=\{1,\ldots,n\}$ such that $\lvert a-b\rvert$ is congruent to $2$ or $-2$ modulo $n$. Let $\mathcal{A}_n$ be the anticycle on $n$ vertices. Let $H$ be the graph obtained by removing the edges $\{a,b\}$ and $\{\overline{a+1},\overline{b+1}\}$ from $\mathcal{A}_n$ and adding the edge $\{\overline{a+1},a\}$. Then $I_G^2$ and $I_G^3$ have linear quotients, and hence admit a linear resolution.
\end{theorem}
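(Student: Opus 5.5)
Without loss of generality (rotate and reflect the labels) take $a=1$, $b=3$. Then $H$ is $\mathcal{A}_n$ with $\{1,3\}$ and $\{2,4\}$ removed and with $\{1,2\}$ added. Note that $\{1,2\}$ is a cycle edge, hence not an edge of $\mathcal{A}_n$, so the addition is genuine.

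I will apply Proposition \ref{propA} with vertex $2$ as the cone point; after relabelling, $2$ plays the role of vertex $n$.

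\textbf{Choice of $G_0$ and $F_0$.} Let $F_0$ be the star centred at $2$ consisting of all edges of $H$ through $2$, namely $\{2,1\}$ and $\{2,j\}$ for $5\le j\le n-1$ (the vertex $n$ is excluded when $n\ge 7$ since $\{2,n\}$ is not an anticycle edge unless\dots, checking: $|2-n|\equiv 2$, so $\{2,n\}$ is an anticycle edge and is included). Let $G_0=H\setminus\{2\}$, the induced graph on $[n]\setminus\{2\}$.

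\textbf{Structure of $G_0$.} Deleting vertex $2$ from $\mathcal{A}_n$ gives the complement of a path $3,4,\dots,n,1$ on $n-1$ vertices. Removing $\{1,3\}$ from this means $G_0$ is the complement of the cycle $1,3,4,\dots,n,1$ on $n-1$ vertices. Hence $G_0\cong\mathcal{A}_{n-1}$, and this relabelling is what makes the choice natural.

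\textbf{Hypothesis (2).} Each edge $\{u,v\}$ of $G_0$ must be adjacent to an edge of $F_0$. Since $N_H(2)=[n]\setminus\{2,3,4\}$, it suffices that no edge of $G_0$ lies inside $\{3,4\}$, and $\{3,4\}$ is indeed a cycle edge, so not in $G_0$.

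\textbf{Hypothesis (1).} I need linear quotient orders $\mathcal{O}_j$ of $I_{G_0}^{s-j}I_{F_0}^{j}$ for $s=2,3$ and $0\le j<s$. For $j=0$ this is $I_{\mathcal{A}_{n-1}}^s$, which has linear quotients by \cite{BDMSM} because $n-1\ge 6$ and $s\ge 2$. For $0<j<s$, note that $I_{F_0}=x_2\,\mathfrak{m}_N$, where $\mathfrak{m}_N$ is generated by the variables in $N=N_H(2)$. Then
\[
I_{G_0}^{s-j}I_{F_0}^j=x_2^j\, I_{G_0}^{s-j}\mathfrak{m}_N^j,
\]
and it suffices to order the generators of $I_{G_0}^{s-j}\mathfrak{m}_N^{j}$. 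Here $N$ is the whole vertex set of $G_0$ except $3$ and $4$, so the products $(s-j,j)\in\{(1,1),(2,1),(1,2)\}$ are small cases. The plan is to take the \cite{BDMSM} (or Fröberg-type, for $s-j=1$, since $\mathcal{A}_{n-1}$ is cochordal) order $L$ on $I_{G_0}^{s-j}$ and order the generators of the product lexicographically. I would order first by a generator $e\in L$, then by the monomial of $\mathfrak{m}_N^j$ in revlex, always taking each minimal generator at its first occurrence. I would then verify the colon ideals are generated by variables by direct case analysis, using that $\mathfrak{m}_N$ contains all variables other than $x_3,x_4$.

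\textbf{Main obstacle.} The hard step is the colon computations for $I_{G_0}\mathfrak{m}_N$ and $I_{G_0}^2\mathfrak{m}_N$. Monomials divisible by $x_3$ or $x_4$ behave differently there, because they can come only from $G_0$-edges. I expect to handle this by ordering the $G_0$-generators so that edges avoiding $\{3,4\}$ come first. A possible alternative is a general lemma: if $I$ has linear quotients and $\mathfrak{m}_N$ is generated by a set of variables covering all edges of $I$, then $I\mathfrak{m}_N$ has linear quotients. Once hypothesis (1) is verified for $s=2,3$, Proposition \ref{propA} gives linear quotients of $I_H^2$ and $I_H^3$. The linear resolution then follows, since the ideals are generated in a single degree.
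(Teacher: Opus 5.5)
Your reduction is the paper's. Taking vertex $2$ as the cone point with $G_0=H\setminus\{2\}\cong\mathcal{A}_{n-1}$ matches the paper's decomposition $H_n=G\cup F$. Hypothesis (2) of Lemma \ref{lem-tech} holds because the two vertices of $G_0$ outside $N$ span the non-edge $\{3,4\}$. The pieces $j=0$ and $j=s$ come from \cite{BDMSM} and from the star. But almost all of the paper's work is hypothesis (1) for $0<j<s$: showing that $I_{G_0}\mathfrak{m}_N$, $I_{G_0}\mathfrak{m}_N^2$ and $I_{G_0}^2\mathfrak{m}_N$ have linear quotients (Lemmas \ref{IGIF-lemma}, \ref{lem2}, \ref{lem3}). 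Your proposal proves none of these; it only announces a case analysis.

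Moreover, your plan for the two cases with $s-j=1$ rests on a false statement. $\mathcal{A}_{n-1}$ is not cochordal, because its complement is the cycle $C_{n-1}$ of length at least $6$. By Fr\"oberg's theorem $I_{G_0}$ does not even have a linear resolution. So there is no linear quotient order $L$ on $I_{G_0}$ to refine, and your general lemma (which assumes $I$ has linear quotients) does not apply to $I=I_{G_0}$. That $I_{G_0}\mathfrak{m}_N$ has linear quotients although $I_{G_0}$ does not is exactly what must be proved directly.

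The paper does this with the lexicographic order, in its labelling where the cone vertex is $n$ and the missing pair is $\{n-2,n-1\}$. The case analysis is driven by the monotonicity of anticycle edges (Remark \ref{rmk1.7}). Where monotonicity fails, namely when vertex $1$ would have to be joined to $n-1$, the star factor rescues the exchange by re-pairing: $\frac{x_{n-1}}{x_{n-2}}\,(x_1x_{n-2})(x_{n-3}x_n)=(x_{n-3}x_{n-1})(x_1x_n)\in I_GI_F$ even though $x_1x_{n-1}\notin I_G$. The case $I_{G_0}\mathfrak{m}_N^2$ is then built on this result. The case $I_{G_0}^2\mathfrak{m}_N$ is again a direct lexicographic argument, not a refinement of the \cite{BDMSM} order.

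Your general lemma, even if proved, would cover only $(s,j)=(3,1)$, and $(3,2)$ once $I_{G_0}\mathfrak{m}_N$ is known. As it stands it is unproved, so hypothesis (1) remains open in every case with $j\ge 1$.
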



Theorem \ref{thmA} follows from Lemma \ref{lem-main}, which we prove in Sections \ref{sec5} and \ref{sec6}.

Returning to Problem \ref{problem-2}, Theorem \ref{thmA} states that, for an anticycle $G$, then there is some family of modifications of $G$ consisting of adding and removing just a few edges which preserve the property that $I_G^2$ and $I_G^3$ have linear quotients. In general, there is a strict limit on how many edges one can remove from $G$ and preserve some power of $I_G$ having linear quotients, since no power can have linear quotients if $G$ has a gap. So if we remove too many edges, then we must add some edges back in the \emph{fill in} the gaps. Proposition \ref{propA} states if $F$ is a star obtained by coning over a vertex cover of $G$ and $I_G^{s-j}I_F^j$ has linear quotients for $j<s$, then $I_{G\cup F}^s$ has linear quotients.

The structure of this paper is as follows. In Section \ref{sec-prelim} we fix some notation and give some preliminaries in graph theory and commutative algebra. In Section \ref{sec:examples} we state some useful lemmas for producing linear quotient orderings and we give some examples that demonstrate some of the subtleties revolving around the theory of linear quotient orderings. In particular we give a computational method for showing when $I_G^2$ does not have linear quotients by the lexicographical order. In Section \ref{sec5} we construct some linear quotient orderings which will be used in the proof of Theorem \ref{thmA}, and in Section \ref{sec6} we finish the proof of Theorem \ref{thmA}, by showing that $I_{H_n}^2$ and $I_{H_n}^3$ have linear quotients by a patching of three and four lexicographical intervals of monomials, respectively.

\section{Preliminaries}\label{sec-prelim}

In this section we discuss some of the essential background needed from the theory of edge ideals and linear quotient orderings.

\subsection{Graph Theory}\label{sec2}

A \emph{graph} is a pair $G=(V,E)$ where $V$ is a set and $E$ is a set of doubletons of elements in $V$. Elements of $V$ are called \emph{vertices} and elements of $E$ are called \emph{edges}. We write $V(G):=V$ and $E(G):= E$. A graph is called \emph{edges} if $|V|<\infty$ and \emph{edges} if $u\neq v$ for $\{u,v\}\in E$. In this case, $|G|:=|V(G)|$ is called the \emph{order} of $G$. All graphs discussed in this paper will be finite and simple. A graph is \emph{connected} if for $u,v\in V$, there is a sequence of edges $\{u,v_1\},\ldots,\{v_{r-1},v_r\},\{v_r,v\}$. The \emph{degree} of a vertex $v\in V$ is the integer $\deg(v):= |\{i\in V\mid \{i,v\}\in E(G)\}|$.

For $n\geq 3$, the \emph{cycle} $C_n$ is the graph whose edge set is $\{\{i,i+1\}\mid i=1,\ldots,n-1\}\cup \{1,n\}$. A \emph{star} is a connected graph such that $\deg(v)=1$ for all but one $v\in V$. The \emph{compliment} $G^c$ of a graph $G$ is the graph $H$ such that $V(H) = V(G)$ and for $u,v\in V$ with $u\neq v$ we have $\{u,v\}\in E(H)$ if and only if $\{u,v\}\notin E(G)$. The compliment $\mathcal{A}_n$ of the cycle of order $n$ is called the \emph{anticycle} of order n. These two graphs coincide if $n=5$.

A \emph{subgraph} of a graph $G$ is a graph $G'$ for which $V(G')\subset V(G)$ and $E(G')\subset E(G)$, in this case we write $G' \subset G$. A subgraph $G'\subset G$ is \emph{induced} if $E(G') = \{\{i,j\}\in E(G)\mid i,j\in V(G')\}$. A graph $G$ is called \emph{chordal} if for any cycle $C_n\subset G$, $C_n$ is not induced unless $n = 3$. A graph $G$ is \emph{cochordal} if $G^c$ is chordal. For instance, any star is cochordal, this fact will be used in the proof of Theorem \ref{thmA}. For any two graphs $G_1$ and $G_2$ on vertex sets $V_1$ and $V_2$ respectively, which are both contained in some common vertex set $V$, we define the \emph{union} of $G_1$ and $G_2$ as the graph $H$ whose vertex set is $V(G_1)\cup V(G_2)\subset V$ and whose edge set is $E(G_1)\cup E(G_2)\subset \{\{a,b\}\mid a,b\in V,a\neq b\}$. Unions of graphs will play an important role throughout this paper.

\subsection{Commutative Algebra}\label{sec3}

Throughout this manuscript fix a field $K$ and let $S = K[x_1,\ldots,x_n]$ be the polynomial ring. The theory of linear quotient orderings is combinatorial, in the sense that we are generally not concerned about the nature of the ground field $K$. To a graph $G = (V,E(G))$, where $V = [n]:=\{1,\ldots,n\}$, we associate its \emph{edge ideal}
\begin{equation*}
    I_G:=(x_ix_j\mid \{i,j\}\in E(G))\subset S.
\end{equation*}

A monomial ideal $I\subset S$ is said to have \emph{linear quotients} if $I$ is principal, or for some ordering $M_1,\ldots,M_r$ of the minimal (monomial) generators of $I$, we have for $i\in \{2,\ldots,r\}$ that
\begin{equation*}
    Q_i:= (M_1,\ldots,M_{i-1}):M_i
\end{equation*} is generated by a subset of $\{x_1,\ldots,x_n\}$. In this case, we call $M_1,\ldots,M_r$ a linear quotient ordering of $I$.

It is well known that if a monomial ideal in $S$ has linear quotients, then it has a linear resolution. Now we recall the Herzog-Hibi-Zheng theorem.
\begin{theorem} (Theorem 3.2 \cite{HHZ})
    If a quadratic monomial ideal $I\subset S$ has linear quotients, then every power of $I$ has linear quotients.
\end{theorem}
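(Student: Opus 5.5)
The plan is to reduce the statement to a one-factor exchange property of the ordering of the generators of $I$, and then lift that property to $I^k$ by an exchange argument on factorizations. Let $u_1,\ldots,u_r$ be a linear quotient ordering of the minimal generators of $I$, all of degree $2$. Saying that $(u_1,\ldots,u_{i-1}):u_i$ is generated by variables is equivalent to the following \emph{exchange property}: for every $j<i$ there is some $l<i$ such that $u_l:u_i=x_t$ for a single variable $x_t$, and $x_t$ divides $u_j:u_i$. Since the $u$'s are quadrics, $u_l:u_i=x_t$ means that $u_l$ and $u_i$ share one variable, so $u_l=x_t\cdot(u_i/x_s)$ for some $x_s\mid u_i$. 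I first record this reformulation, which is routine. I then aim to strengthen it by reordering, so that the order on generators is the restriction of a lex-type order from a suitable ordering of the variables. For edge ideals this is where Fröberg's theorem enters: co-chordality gives a perfect elimination ordering, and the lex order with respect to it has linear quotients. For general quadrics I would try to reach the same situation via polarization, or by directly adjusting the order.

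Next I order the minimal generators of $I^k$. Each generator $m$ is written as a product $u_{i_1}\cdots u_{i_k}$ with $i_1\le\cdots\le i_k$. Since such a factorization need not be unique, I fix for each $m$ its \emph{canonical} factorization: the one whose index vector $(i_1,\ldots,i_k)$ is lexicographically smallest. I then order the generators of $I^k$ by the lex order of these canonical index vectors. To verify linear quotients, take $m'$ earlier than $m$. I want an earlier $m''$ with $m'':m$ a single variable dividing $m':m$.

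The argument runs as follows. Compare the canonical vectors of $m'$ and $m$ at their first differing position. In that position some factor $u_{j}$ of $m'$ satisfies $j<i_p$, where $u_{i_p}$ is the corresponding factor of $m$. Apply the exchange property to the pair $u_j,u_{i_p}$. This yields $u_l$ with $l<i_p$ and $u_l:u_{i_p}=x_t$, where $x_t$ divides $u_j:u_{i_p}$. Set $m''=m\,u_l/u_{i_p}$; then $m'':m$ divides $x_t$. Its index vector is lex smaller than that of $m$, so $m''$ comes earlier, and this remains true after passing to the canonical factorization of $m''$. Two points remain. First, $x_t$ must divide $m':m$, not just $u_j:u_{i_p}$. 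Second, $m''\neq m$ must hold, so that the colon is exactly $x_t$ and not the unit ideal.

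I expect the main obstacle to be exactly these two points. The variable $x_t$ may already occur in the other factors of $m$ to a higher power than in $m'$. In that case the exchange at a single position does not produce a variable dividing $m':m$, and multiple factorizations of $m$ cause the same trouble. What I would try first is to exploit the freedom in choosing the position $p$ and the partner factor $u_j$. If $x_t$ is absorbed elsewhere in $m$, I would use that factor instead, and then argue by induction on a suitable measure, such as $\deg \gcd(m,m')$ or the first index where the two canonical factorizations differ, to reach a pair where no absorption occurs. If that becomes unwieldy, the fallback is the Rees-algebra route of \cite{HHZ}. There one shows that the defining ideal of the Rees algebra of $I$ has a quadratic Gröbner basis satisfying the $x$-condition with respect to a suitable term order, and linear quotients for all $I^k$ then follow uniformly in $k$. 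The hardest part of that route is establishing the $x$-condition from the exchange property of the degree-two generators.
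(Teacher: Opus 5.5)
\textbf{There is a genuine gap.} Your proposal stops exactly where the content of the theorem lies. The exchange at the first differing position gives $m''=m\,u_l/u_{i_p}$ with $m'':m=u_l:u_{i_p}=x_t$. So your second concern is vacuous: $m''\neq m$ holds automatically. The first concern is the real one, and you meet it only with an unspecified induction.

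Your $I^k$-argument never uses that the generators are quadrics: the reformulation $u_l=x_t\,u_i/x_s$ holds in every degree. Yet the cubic analogue of the statement is false. Sturmfels' squarefree cubic ideal has linear quotients, but its square has no linear resolution. So no argument that uses only the exchange property of an \emph{arbitrary} linear quotient order of $G(I)$ can succeed.

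In fact your proposed ordering of $I^2$ already fails for a quadratic ideal. Take $u_1=x_1x_2$, $u_2=x_2x_3$, $u_3=x_1x_4$, $u_4=x_2x_5$, $u_5=x_1x_5$, $u_6=x_5x_6$. This order has linear quotients, with successive colons $(x_1)$, $(x_2)$, $(x_1,x_3)$, $(x_2,x_4)$, $(x_1,x_2)$. Now let $m=u_2u_3=x_1x_2x_3x_4$.
\begin{itemize}
\item Since $m/u_1=x_3x_4\notin I$, the canonical vector of $m$ is $(2,3)$.
\item Hence the generators of $I^2$ preceding $m$ are exactly the $u_1u_j$ for $1\le j\le 6$, together with $u_2^2$.
\item Their colon by $m$ is $(x_1,x_2,x_5x_6)$, which is not generated by variables. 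The quadric $x_5x_6$ comes from $m'=u_1u_6$.
\item Your exchange at position $1$ produces $m''=u_1u_3$ with $m'':m=x_1$. This $x_1$ is absorbed by $u_3=x_1x_4$, which is precisely the obstacle you flagged, and no other earlier generator rescues it.
\end{itemize}

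\textbf{What is missing.} The ordering of $G(I)$ must be chosen specially, and the heart of the proof is showing that this choice rules out absorption. Such a choice would come from Fr\"oberg/Dirac, e.g.\ a perfect elimination ordering of the chordal complement, plus an argument that also handles generators of the form $x_i^2$. Your first step gestures at such a reordering, but the subsequent argument never uses it. Your fallback names the route of \cite{HHZ}, namely a quadratic Gr\"obner basis of the Rees ideal satisfying the $x$-condition. However, you explicitly leave its only nontrivial step undone. As written, the proposal is a plan rather than a proof. The paper itself gives no proof to compare against; it simply quotes Theorem 3.2 of \cite{HHZ}.
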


In Proposition 4.1 \cite{BDMSM} they construct a particular linear quotient ordering for the edge ideal of whisker graphs.

In \cite{BDMSM} they also prove that, even though anticycles are not cochordal, high enough powers (in fact, at least two) of their edge ideals still have linear quotients.

\begin{theorem}(Theorem 5.6 \cite{BDMSM})
    For $m\geq 5$ and $s\geq 2$, the ideal $I_{\mathcal{A}_m}^s$ has linear quotients.
\end{theorem}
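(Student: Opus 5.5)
The plan is to exhibit an explicit linear quotient ordering of the minimal generators of $I_{\mathcal{A}_m}^s$ and verify it with the standard pairwise exchange criterion. Label the vertices of $\mathcal{A}_m$ by $[m]$, so that $\{i,j\}$ is an edge exactly when $j\not\equiv i\pm 1 \pmod m$. Order the minimal generators of $I_{\mathcal{A}_m}^s$ (monomials of degree $2s$) by decreasing lexicographic order with $x_1>\cdots>x_m$.

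To show this is a linear quotient ordering, it suffices to prove the following. For generators $u$ preceding $v$, there should be a variable $x_k$ dividing $u/\gcd(u,v)$ and a generator $w$ preceding $v$ with $w/\gcd(w,v)=x_k$, i.e.\ $w=x_k v/x_l$ for some $x_l\mid v$ with $l>k$.

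Since $u>_{\lex} v$, take $k$ to be the smallest index at which the exponent of $x_k$ in $u$ exceeds that in $v$. Then $v$ and $u$ agree in all variables $x_1,\ldots,x_{k-1}$. Write $v=e_1\cdots e_s$ as a product of edges. I want an edge $e_t=x_ax_l$ of $v$ with $l>k$ and $a\neq k$ such that $a$ is adjacent to $k$ in $\mathcal{A}_m$, i.e.\ $a\notin\{k-1,k+1\}$ modulo $m$. Then replacing $x_l$ by $x_k$ gives $w=x_k v/x_l\in I^s$, and $w$ precedes $v$. Since $k$ is adjacent to all but two vertices, this succeeds in the generic case. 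Counting exponents also gives that $v$ has some vertex of index $>k$, because $\deg u=\deg v$ and the two agree below $k$.

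The main obstacle is the degenerate configurations where every edge of $v$ with an endpoint of index $>k$ has partner $k-1$ or $k+1$ (or $k$ itself). Here I use $s\geq 2$, which is exactly where the statement differs from the case $s=1$: the ideal $I_{\mathcal{A}_m}$ itself is not linear-quotient, since $C_m$ is not chordal.

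With two edges available, I would re-pair. Given edges $x_{k\pm1}x_l$ and $x_cx_d$ in $v$, I rewrite $v$ via $x_{k\pm1}x_l\cdot x_cx_d = x_{k\pm1}x_c\cdot x_lx_d$ or $x_{k\pm 1}x_d\cdot x_lx_c$. One of these should be a valid product of edges of $\mathcal{A}_m$, since each vertex misses only two neighbours and $m\geq5$. This produces a new factorization of $v$ in which some large-index vertex is paired with a vertex other than $k\pm1$, after which the generic exchange applies.

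The remaining worst case is $v=(x_{k-1}x_{k+1})^{s}$-like monomials, or those where all of $v$'s large variables sit on $k\pm1$ itself. I expect to handle these separately. In that situation, $u$ containing $x_k$ together with agreement below $k$ forces $x_k$ to pair in $u$ with something in $v$ of larger index. I would check directly that $x_kx_{k+1}^{-1}v$ or $x_kx_{k+2}$-type swaps lie in $I^s$. Indices are taken mod $m$, with special attention to the wrap-around pair $\{1,m\}$, where lex order and cyclic adjacency interact. If the plain lex order fails at the wrap-around, I would instead adjust the variable order, for example to $x_1>x_3>\cdots$, or treat vertex $m$ last, and redo the same case analysis.
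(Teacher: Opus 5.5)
The paper gives no proof of this statement. It is quoted from \cite{BDMSM}, and in Corollary \ref{cor2} the paper orders the generators of $I_{\mathcal{A}_{n-1}}^2$ by Theorem 5.4 of \cite{BDMSM}, not lexicographically. Your argument therefore has to stand alone, and it has a genuine gap: the decreasing lex order with $x_1>\cdots>x_m$ is \emph{not} a linear quotient ordering of $I_{\mathcal{A}_m}^s$ for any $m\geq 5$ and $s\geq 2$. Take
\[
u=(x_1x_3)^s,\qquad v=(x_1x_3)^{s-1}x_2x_m .
\]
Both are products of $s$ edges (note $\{2,m\}\in E(\mathcal{A}_m)$), and $u>_{\lex}v$. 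The monomial $x_1x_3=u/\gcd(u,v)$ lies in the colon ideal at $v$. If that colon ideal were generated by variables, it would contain $x_1$ or $x_3$, so some $x_1v/x_j$ or $x_3v/x_j$ would be an earlier generator. Neither is possible:
\begin{enumerate}
\item[(a)] Each $x_1v/x_j$ has $x_1$-exponent $s$, but its only variables adjacent to $1$ in $\mathcal{A}_m$ are at most $s-1$ copies of $x_3$.
\item[(b)] The only lex-larger $x_3$-swap is $x_3v/x_m=x_1^{s-1}x_2x_3^{s}$, in which the $s$ copies of $x_3$ have only $s-1$ admissible partners.
\end{enumerate}
So none of these candidates lies in $I_{\mathcal{A}_m}^s$. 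For $m=5$, $s=2$, $v$ is the fifth generator $x_1x_2x_3x_5$ of your list, and its colon ideal is $(x_4,\,x_1x_3)$.

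This is exactly the wrap-around configuration you were worried about: $k=1$, and $v$ contains the edge $x_2x_m$, both of whose endpoints are cycle-neighbours of $1$. It shows which steps are false.
\begin{enumerate}
\item[(i)] Your re-pairing claim fails: $x_1x_3\cdot x_2x_m$ has no other factorization, since $x_1x_2$, $x_2x_3$ and $x_1x_m$ are all non-edges.
\item[(ii)] More generally, when $k=1$ and $v$ is degenerate, every edge of $v$ is either $x_1x_l$ with $3\le l\le m-1$ or $x_2x_m$. The same count then gives $x_1v/x_l\notin I_{\mathcal{A}_m}^s$ for every $l$. So the case you defer genuinely needs a different variable of $u/\gcd(u,v)$, and none need exist.
\item[(iii)] Changing the variable order is not a routine re-run either. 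Rotations and reflections of $C_m$ are automorphisms of $\mathcal{A}_m$, so the same obstruction appears for every order running monotonically around the cycle in either direction. This includes the paper's convention $x_n>\cdots>x_1$; compare the obstruction-monomial arguments in Remarks \ref{rmk-notlex1} and \ref{rmk-notlex2} for nearby graphs.
\end{enumerate}
What is missing is the actual content of the theorem: an explicitly specified ordering that is not of this form (for instance one spliced from lexicographic blocks), together with a verification of the exchange condition that covers these configurations.
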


In \cite{BDMSM} they give a computer algorithm which can be of use for testing when powers of edge ideals $I_G^s$ have linear quotients for small $s$ when $G$ has a small number of edges. From the author's testing, beyond around $s=3$ and $|E(G)|=36$ this program is generally too computationally expensive for even current supercomputers, albeit it can be useful to test for which graphs $G$ we can expect $I_G^s$ to have linear quotients for at least small enough $s$. See Example \ref{ex2} for a calculation using the algorithm of \cite{BDMSM}.

\section{Notation and Examples of Linear Quotient Orderings}\label{sec:examples}
In this section we give some notation and lemmas which will be useful during the proof of Theorem \ref{thmA}, as well as some examples and non-examples of linear quotient orderings of graphs which are similar to the graphs of Theorem \ref{thmA}.

\subsection{General remarks and machinery to obtain linear quotient orderings}
We begin by introducing some notation which will be useful in proving Theorem \ref{thmA}. The following lemma is consistently useful in the construction of linear quotient orderings.


\begin{lemma}[Lemma 2.2 \cite{BDMSM}]\label{lem-LQ}
    Let $I=(M_1,\ldots,M_r)\subset S$ be a monomial ideal. Then the ordering $M_1,\ldots,M_r$ is a linear quotient ordering of $I$ if and only if:
    \begin{equation*}
        \begin{split}
            &\text{for $i,j\in\{1,\ldots,r\}$ with $j<i$ there exists $h<i$ (possibly equal to $j$) such that }\\
            &
            M_h/\gcd(M_h,M_i)\in\{x_1,\ldots,x_n\}\\
            &\text{ and }
            M_h/\gcd(M_h,M_i)\mid M_j/\gcd(M_j.M_i).
        \end{split}
    \end{equation*}
\end{lemma}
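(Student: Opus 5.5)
We reduce the linear quotient property to a statement about pairs of generators and then verify that statement directly from the definition of colon ideals of monomial ideals. Write $I=(M_1,\ldots,M_r)$ and, for $i\geq 2$, set $Q_i=(M_1,\ldots,M_{i-1}):M_i$.

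The first step is the standard colon formula for monomial ideals:
\begin{equation*}
Q_i=\bigl(M_j/\gcd(M_j,M_i)\;\bigm|\; j<i\bigr).
\end{equation*}
To prove it, note that a monomial $u$ lies in $Q_i$ if and only if $uM_i\in(M_1,\ldots,M_{i-1})$. Since everything is monomial, this holds if and only if $M_j\mid uM_i$ for some $j<i$. That divisibility is equivalent to $M_j/\gcd(M_j,M_i)\mid u$, which one checks exponent by exponent using $\max(a_k-b_k,0)$. Because $Q_i$ is a monomial ideal, it is determined by the monomials it contains, so the formula follows.

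The second step uses the fact that a monomial ideal is generated by a subset of the variables if and only if each of its minimal monomial generators is a variable. Equivalently, every element $N_j:=M_j/\gcd(M_j,M_i)$ of the generating set above is divisible by some generator of $Q_i$ that is a variable.

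For the direction ($\Rightarrow$), suppose $Q_i$ is generated by variables and fix $j<i$. Then $N_j\in Q_i$, so some variable $x_k\in Q_i$ divides $N_j$. Since $x_k\in Q_i$ and $x_k$ is a monomial, the formula gives some $h<i$ with $N_h\mid x_k$. Here $N_h\neq 1$, because otherwise $M_h\mid M_i$, which contradicts minimality of the generators. Hence $N_h=x_k$, and this $h$ works.

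For the direction ($\Leftarrow$), the condition says that each generator $N_j$ of $Q_i$ is divisible by some $N_h$ with $h<i$ that is a variable and itself lies in $Q_i$. Therefore $Q_i$ is generated by $\{N_h: h<i,\ N_h \text{ a variable}\}$, which is a set of variables.

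The only point needing care is the minimality argument excluding $N_h=1$. That is where we use that $M_1,\ldots,M_r$ are the minimal generators. The principal case $r=1$ is vacuous on both sides.
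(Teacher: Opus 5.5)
Your proof is correct. The paper gives no proof of its own here; it simply cites Lemma 2.2 of \cite{BDMSM}. Your argument, via the colon formula $Q_i=\bigl(M_j/\gcd(M_j,M_i)\mid j<i\bigr)$ and the observation that $Q_i$ is generated by variables exactly when each such generator is divisible by one of them that is itself a variable, is the standard proof of that cited fact.

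One small simplification: in the direction ($\Rightarrow$) you do not need minimality of the generators to exclude $N_h=1$. Indeed, $N_h=1$ would give $1\in Q_i$, whereas an ideal generated by variables is proper.
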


Sometimes in constructing a linear quotient ordering of a power of an edge ideal, it can be useful to use (some variant of) the lexicographical order of monomials for some parts of the ordering, which can sometimes then be glued together to form a total linear quotient order of the ideal, c.f. \cite{BDMSM}.

\begin{definition}\label{df1}
Let $<_{\lex}$ be the lexicographical ordering on the set of (monic) monomials in $S$ induced by the prescription that $x_n>x_{n-1}>\ldots>x_1$. That is 
    \begin{equation*}
        \begin{split}
            x_1^{\beta_1}\cdots x_n^{\beta_n}&<_{\lex}x_1^{\alpha_1}\cdots x_n^{\alpha_n} \text{ if and only if }
            (\beta_n,\beta_{n-1},\ldots,\beta_1)
            <_{\lex}
            (\alpha_n,\alpha_{n-1},\ldots,\alpha_1)
        \end{split}
    \end{equation*}
\end{definition}

For instance, the lexicographical order (from largest to smallest) of the monomials of degree three in $\mathbb{C}[x_1,x_2,x_3]$ is $x_3^3, x_3^2x_2,x_3^2x_1,x_3x_2^2,\ldots, x_1^3$.

The following remark is clear and we state it for convenience.
\begin{remark}\label{lem1}
    Let $M_1,M_2\in S$ be monomials such that $\deg(\gcd(M_1,M_2))= \deg(M_1) +\deg(M_2) - 1$.
    Then $M_3/\gcd(M_3,M_2)\in \{x_1,\ldots,x_r\}$.
\end{remark}


\subsection{The Anticycle Modification \texorpdfstring{$H_n$}{H\_n}}

In order to prove Theorem \ref{thmA} we shall consider the following modified version $H_n$ of the anticycle graph $\mathcal{A}_n$.

Fix an integer $n\geq 7$ throughout the rest of this paper. Let $G$ be the anticycle on $n-1$ vertices, which we will also denote by $\mathcal{A}_{n-1}$. Let $F$ be the star graph whose vertex set is $[n]$ and whose edge set is $\{\{i,n\}\mid 1\leq i\leq n-3\}$. Let $H_n$ be the graph obtained by removing the edges $\{n-2,n\}$ and $\{1,n-1\}$ from the anticycle of order $n$ and adding the edge $\{1,n\}$. We see that $E(H_n) = E(G) \cup E(F)$. $H_n$ will be important for the proof of \ref{thmA} since all graphs satisfying the hypothesis of \ref{thmA} are isomorphic to $H_n$.

\begin{remark}\label{rmk-gap1}
    The graph $H_5$ has the a gap, namely by the edges $\{1,3\}$ and $\{2,4\}$, and so $I_{H_5}^s$ does not have linear quotients for any $s\geq 2$. In fact it is clear from the definitions that $H_5\cong P_5$.
\end{remark}

It is of general interest, given a graph $H$ and a positive integer $s\geq 2$, whether $I_H^s$ has linear quotients by the lexicographical order of the variables (see Definition \ref{df1}). The graph $H_n$ is nontrivial in the following sense.

\begin{remark}\label{rmk-notlex1}
    If $n\geq 6$, then $I_{H_n}^2$ does not have linear quotients by the lexicographical order.
\end{remark}
\begin{proof}
Let $D_1 = x_1x_{n-3}x_{n-2}x_{n-1}$ and $D_3 = x_1^2x_{n-2}^2$. We may assume that $r\geq 6$ by Remark \ref{rmk-gap1}. Note that $D_1>_{\lex}D_3$ and the only linear divisors of
\begin{equation*}
    D_1/\gcd(D_1,D_3) = x_{n-3}^2x_{n-1}
\end{equation*}
are $x_{n-3}$ and $x_{n-1}$. If the lexicographic ordering is a linear quotient ordering, then by Lemma \ref{lem-LQ}, for some $i\in\{n-3,n-1\}$, and some variable $x\in \{x_1,\ldots,x_r\}$ dividing $D_3$ we would have that $M_3:=\frac{x_i}{x}D_3$ satisfies $M_3\in I_{G_r}^2$ and $M_3>_{\lex}D_3$. However
\begin{equation*}
    \begin{split}
        \frac{x_{n-3}}{x_1}D_3 &= x_1x_{n-2}^2x_{n-3}\notin I_{H_n}^2\text{, }
        \frac{x_{n-3}}{x_{n-2}}D_3 = x_1^2x_{n-3}x_{n-2}<_{\lex}D_3\text{, }\\
        \frac{x_{n-1}}{x_1}D_3 &= x_1x_{n-2}^2x_{n-1}\notin I_{H_n}^2\text{, and }
        \frac{x_{n-1}}{x_{n-2}}D_3 = x_1^2x_{n-2}x_{n-1}\notin I_{H_n}^2.
    \end{split}
\end{equation*}
This completes the proof of the remark.
\end{proof}

\subsection{Obstructions to Lexicographical Linear Quotients}

In this subsection we mention a computational method to prove that for suitable graphs $H$, to $I_H^2$ does not have linear quotients by the lexicographical order of its monomial generators. An important piece of this method involves linear quotient orders of the ideal $I_H^2$ which are \emph{piecewise} lexicographical (with a small number of pieces relative to the number of generators of the ideal), in a sense that we shall make precise below, see (\ref{eq-piecewise-LQ}).

Consider a family of graphs $\{H'_m\mid i\in \mathbb{Z}_{\geq 5}\}$ of order $m$ which are defined \emph{uniformly from anticycles}, i.e. by $q$-many additions or removals of edges from $\mathcal{A}_m$ where $q$ does not depend on $m$ (such as $\{H_m\mid m\geq 5\}$, where $q=3$). For such a family of graphs, we can use the \emph{Macaulay2 computer algebra software} and the findLinearOrderings method from \cite{BDMSM} to find a linear quotient order of $I_{H_m}^2$ for small $m$ (e.g. $m=5,6,7$) if $I_{H'_m}^2$ does not have linear quotients by the lexicographical order, then the program might output some ordering $\mathcal{O}$ which lexicographical except in a few places, where the number $p$ of places in our testing seems to be small if $q$ is small. More precisely that $\mathcal{O}$ has the form
\begin{equation}\label{eq-piecewise-LQ}
    \mathcal{O} = (M_{1,1}, M_{1,2},\ldots, M_{1,n_1}, M_{2,1},\ldots,M_{2,r_2},\ldots, M_{p+1,1},\ldots,M_{p+1,r_{p+1}})
\end{equation}
where $(M_{k,1},\ldots,M_{k,r_i})$ is lexicographically ordered for all $k$, but $M_{k,r_k}<_{\lex}M_{k+1,1}$ for each $k$.

Let $D_1>_{\lex}\ldots>_{\lex} D_{p'}$ be the \emph{obstruction monomials} $M_{1,r_1},M_{2,1},M_{2,r_2},\ldots,M_{p-1,r_{p-1}}$, $M_{p,1}$. Then one can attempt to check (if possible) that the condition of Lemma \ref{lem-LQ} is violated for all $h$, given some choice of indices $j_0<i_0$ and the monomials $M_j= D_{j_0}$ and $M_i:= D_{i_0}$ by checking all $h<i$ which have some chance to satisfy the hypothesis of Lemma \ref{lem-LQ} (i.e. similarly to the proof of Remark \ref{rmk-notlex1}). In this case, $I_{H'_i}^2$ cannot have linear quotients by the lexicographical order of its monomials by Lemma \ref{lem-LQ}. We demonstrate this method for a slightly different family of graphs in Remark \ref{rmk-notlex2}.

\begin{example}
    By computation in Macaulay2, we find that for $i=6$ and $7$, $H_i$ admits a linear quotient order of the above form $\mathcal{O}$ where
    \begin{equation*}
        \begin{split}
            \text{there are} &\text{ $3$ \emph{switches}, i.e. }p=3\text{, and }
            \text{the obstructions monomials are: }\\ &D_1 = x_1x_{n-3}x_{n-2}x_{n-1}, D_2 = x_1x_{n-3}^2x_{n-1},D_3 = x_1^2x_{n-2}^2\text{, and }D_4 = x_1^2x_{n-3}x_{n-2}.
        \end{split}
    \end{equation*}
\end{example}

\subsection{Further Anticycle Modifications and Their Orderings}

Continuing in the theme of linear quotient orderings of modifications of the anticycle graph (given by adding and removing a few edges), we conclude this section by giving some remarks regarding linear quotient orderings of a graph $G_n$, similar to the graph $H_n$ of Theorem \ref{thmA}. Let $G_n$ be the anticycle of order $n\geq 6$ with the edge $\{n-2,n\}$ removed.

We obtained the following calculation in Macaulay2.

\begin{example}\label{ex2}
    We see in the following Macaulay2 Calculation that $I_{G_6}^2$ has linear quotients thanks to the Macaulay2 method findLinearOrderings from \cite{BDMSM}.
    

\begin{enumerate}
    \item[i1]Q $=$ QQ$[x_1..x_6]$;
    \item[i2] I $= \text{ideal}(x_1*x_3,x_1*x_4,x_1*x_5,x_2*x_4,x_2*x_5,x_2*x_6,x_3*x_5,x_3*x_6)$;
    \item[i3]regularity I;
    \item[o4]$=3$ 
    \item[i5]regularity I$^2$;
    \item[o6]$=4$
    \item[i7]$\text{findLinearOrderings}(\text{I}^2,6)$
    
    Linear ordering found, returning as a list.
    
    -- Elapsed time: 2.45513 seconds.
    \item[o7]
    \begin{equation*}
        \begin{split}
            &= \{x_3^2x_6^2,x_2x_3x_6^2,x_2^2x_6^2,x_3^2x_5x_6,x_2x_3x_5x_6,x_1x_3x_5x_6,x_2^2x_5x_6,x_1x_2x_5x_6,x_2x_3x_4x_6,x_2^2x_4x_6,\\
            &x_1x_2x_4x_6,x_2x_2x_3x_6,x_2x_3^2x_6,x_1x_3x_4x_6,x_3^2x_5^2x_2x_3x_5^2,x_2^2x_5^2,x_1x_2x_5^2,x_1^2x_5^2,x_2x_3x_4x_5,x_1x_3x_4x_5,\\
            &x_2^2x_4x_5,x_1x_2x_4x_5,x_1^2x_4x_5,x_1x_3^2x_5,x_1x_2x_3x_5,x_1^2x_3x_5,x_2^2x_4^2,x_1x_2x_4^2,x_1x_2x_3x_4,x_1^2x_3x_4x_1^2x_4^2,x_1^2x_3^2\}
        \end{split}
    \end{equation*}
\end{enumerate}
\end{example}

\begin{remark}\label{rmk0}
     Calculation by Macaulay2 verifies that $I_{G_r}^3$ has linear quotients for $r=6,7,8$. Thanks to Adam Van Tuyl for running the $n=8$ case. On the other hand $G_5$ is not even gap free, and hence $I_{G_5}^j$ does not have linear quotients for any $j$. In fact the pair of edges $(\{1,3\}, \{2,5\})$ forms a gap in $G_5$.
\end{remark}

\begin{remark}\label{rmk-notlex2}
    For $r\geq 5$, the lexicographical ordering (induced by the monomial order $x_r>\ldots>x_1$) is not a linear quotient order of the minimal generators of $I_{G_r}^2$.
\end{remark}
\begin{proof}
Let $D_2:= x_1x_{r-3}^2x_r$ and $D_5:= (x_1x_{r-2})^2$. We may assume that $r\geq 6$ by Remark \ref{rmk0}. Note that $D_2>_{\lex}D_5$ and the only linear divisors of
\begin{equation*}
    D_2/\gcd(D_2,D_5) = x_{r-3}^2x_r
\end{equation*}
are $x_{r-3}$ and $x_r$. If our claim fails, then for some $i\in\{r-3,r\}$, and some variable $x\in \{x_1,\ldots,x_r\}$ dividing $D_5$ we would have that $M_3:=\frac{x_i}{x}D_5$ satisfies $M_3\in I_{G_r}^2$ and $M_3>_{\lex}D_5$. However
\begin{equation*}
    \begin{split}
        \frac{x_{r-3}}{x_1}D_5 &= x_1x_{r-3}x_{r-2}^2\notin I_{G_r}^2\text{, }
        \frac{x_{r-3}}{x_{r-2}}D_5 = x_1^2x_{r-3}x_{r-2}<_{\lex}D_5\text{, }\\
        \frac{x_r}{x_1}D_5 &= x_1x_{r-2}^2x_r\notin I_{G_r}^2\text{, and }
        \frac{x_r}{x_{r-2}}D_5 = x_1^2x_{r-2}x_r\notin I_{G_r}^2.
    \end{split}
\end{equation*}
\end{proof}

In the terminology of Problem \ref{problem-2}, the graph $G_n$ (up to graph isomorphism) is any modification of $\mathcal{A}_n$ obtained by removing an edge $\{a,b\}$ for which $|a-b|$ is congruent to $2$ modulo $n$.

\section{Proof of Theorem \ref{thmA}: Composite Linear Quotient Orderings}\label{sec5}

In this section we will introduce and prove several lemmas which will be useful in proving that Theorem \ref{thmA}. We accomplish this by decomposing the cube of the edge ideal of $H_n$ into a sum of four monomial ideals $I_{H_n}^3=I_1+I_2+I_3+I_4$. We then prove that $I_j$ has linear quotients for $1\leq j\leq 4$ and we carefully patch these four orderings into a \emph{composite} linear quotient order of $I_{H_n}^3$. The theoretical realization of this patching process is in Lemma \ref{lem-tech}.

\begin{notation}\label{not1}
    Let $\mathcal{O} = (\mathcal{E}, <)$ be a set of monomials $\mathcal{E}$ in $S$ with a fixed partial order $<$. To express that a monomial $M\in S$ is in the underlying set of $\mathcal{O}$ we will write $M\in \mathcal{O}$. Let $I\subset S$ be the ideal generated by the monomials of $\mathcal{O}$. Fix generators $M_1,M_2\in \mathcal{O}$ such that $M_1$ precedes $M_2$ in $\mathcal{O}$. To show that $\mathcal{O}$ is a linear quotient ordering of $I$, it suffices to show that there exits a generator $M_3\in \mathcal{O}$ such that
\begin{enumerate}
    \item[(i)] $M_3$ precedes $M_2$
    \item[(ii)] $M_3/\gcd(M_3,M_2)\in \{x_1,\ldots,x_n\}$, and
    \item[(iii)] $M_3/\gcd(M_3,M_2)\mid M_1/\gcd(M_1,M_2)$.
\end{enumerate}

We will say for conciseness that a given generator $M_3\in \mathcal{O}$ \textit{\emph{works in $\mathcal{O}$} with respect to $M_1$ and $M_2$} (or more concisely that \textit{$M_3$ \emph{works}}, if $M_1$, $M_2$, and $\mathcal{O}$ are understood) if conditions (i), (ii), and (iii) hold for that particular monomial $M_3$.
\end{notation}

Recall that $H_n$ is the graph obtained by removing the edges $\{n-2,n\}$ and $\{1,n-1\}$ from the anticycle of order $n$ and adding the edge $\{1,n\}$.

\begin{lemma}\label{lem-main}
    $I_{H_n}^s$ has linear quotients for $s=2,3$.
\end{lemma}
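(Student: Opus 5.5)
We will deduce Lemma \ref{lem-main} from the composite construction of Proposition \ref{propA} (Lemma \ref{lem-tech}), applied with $G_0=G=\mathcal{A}_{n-1}$ on $[n-1]$ and $F_0=F$ the star centered at $n$ with leaves $1,\ldots,n-3$, so that $H_0=H_n$.

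\textbf{Hypothesis (2).} Let $\{i,j\}\in E(G)$ with $i<j$. If $i\le n-3$, then the edge $\{i,j\}$ shares the vertex $i$ with the edge $\{i,n\}\in E(F)$. Otherwise $\{i,j\}=\{n-2,n-1\}$, which is an edge of $C_{n-1}$ and hence not an edge of $G$. So every edge of $G$ meets some edge of $F$.

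\textbf{Hypothesis (1), easy factors.} The factor $I_F^s = x_n^s(x_1,\ldots,x_{n-3})^s$ has linear quotients by lex, since it is a shifted power of a linear ideal. For $s=2,3$ we need linear quotient orderings $\mathcal{O}_j$ of $I_G^{s-j}I_F^j$ for $0\le j<s$.
\begin{itemize}
\item For $j=0$, Theorem 5.6 of \cite{BDMSM} gives $I_{\mathcal{A}_{n-1}}^s$ linear quotients, since $n-1\ge 6$.
\item The remaining cases are the mixed products $I_GI_F$ (for $s=2$), and $I_G^2I_F$ and $I_GI_F^2$ (for $s=3$).
\end{itemize}
Since $I_F=x_n\mathfrak{m}'$ with $\mathfrak{m}'=(x_1,\ldots,x_{n-3})$ and $x_n$ does not occur in $I_G$, we have
\[
I_G^{s-j}I_F^j = x_n^j\, I_G^{s-j}(\mathfrak m')^j .
\]
Multiplying by the monomial $x_n^j$ preserves linear quotients, so it suffices to order the minimal generators of $I_G^{k}(\mathfrak m')^j$ for $(k,j)\in\{(1,1),(2,1),(1,2)\}$.

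\textbf{Hypothesis (1), mixed factors.} I would first try the lexicographic order with $x_{n-1}>\cdots>x_1$ and verify the criterion of Lemma \ref{lem-LQ} by the exchange argument of Notation \ref{not1}. Take $M_1>_{\lex}M_2$ and let $x_p$ be the largest variable at which their exponents differ, so $x_p$ has larger exponent in $M_1$. We want a variable $x_q\mid M_2$ with $q<p$ such that $M_3=x_pM_2/x_q$ is again a minimal generator. Then $M_3>_{\lex}M_2$ and $M_3/\gcd(M_3,M_2)=x_p$ divides $M_1/\gcd(M_1,M_2)$.

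To find $x_q$, factor $M_2=e_1\cdots e_k\cdot y_1\cdots y_j$ with $e_t$ edges of $G$ and $y_t\in\mathfrak m'$.
\begin{itemize}
\item If $p\le n-3$ and some $y_t$ has index below $p$, replace that $y_t$ by $x_p$.
\item Otherwise, exchange inside an edge. In the anticycle, a vertex $u$ is adjacent to every vertex except $u\pm1$. So for an edge $\{a,b\}$ with $b<p$, we can replace $x_b$ by $x_p$ unless $p\in\{a\pm1\}$ or $p=a$.
\end{itemize}
The failures are confined to a few configurations near the vertices $n-3,n-2,n-1,1$. These are exactly the kind appearing in Remark \ref{rmk-notlex1}, and they must be handled by swapping factors between the edges and the linear part. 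For example, rewrite $x_ax_{a+1}y$ as $(x_ax_y)(x_{a+1})$ when $y\le n-3$ and $y\notin\{a\pm1\}$.

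\textbf{Main obstacle and fallback.} The main obstacle is $I_G^2\mathfrak m'$. There the exponents $2$ and the wrap-around of the cycle $C_{n-1}$ (vertices $1$ and $n-1$ adjacent in $C_{n-1}$, hence not in $G$) may genuinely break lex. If they do, I would use a piecewise lexicographic order as in (\ref{eq-piecewise-LQ}), splitting the generators according to the exponent of $x_{n-1}$ and $x_{n-2}$ (the variables absent from $\mathfrak m'$). Each piece would be ordered by lex, and the pieces would be glued by checking the Lemma \ref{lem-LQ} condition for the obstruction pairs, using $n\ge7$ to guarantee enough non-neighbouring vertices for the exchanges.

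Once all the $\mathcal{O}_j$ are in hand, the concatenated ordering $(\mathcal{O}_s,\ldots,\mathcal{O}_0)$ is a linear quotient ordering of $I_{H_n}^s$ for $s=2,3$ by Lemma \ref{lem-tech}.
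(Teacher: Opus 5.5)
Your reduction matches the paper's: Lemma \ref{lem-tech} with $G_0=G$ and $F_0=F$. Your check of hypothesis (2), the use of \cite{BDMSM} for $I_G^s$, the treatment of $I_F^s$, and cancelling the common factor $x_n^j$ are all correct. The gap is that you never establish hypothesis (1) for the mixed products $I_GI_F$, $I_G^2I_F$ and $I_GI_F^2$, and that is where the entire content of the lemma lies. In the paper this is Lemmas \ref{IGIF-lemma}, \ref{lem2} and \ref{lem3}, which show that plain lex is a linear quotient order for each of the three, so no piecewise fallback is needed. Your sketch only asserts that failures are ``confined to a few configurations'' and can be repaired by regrouping, and for $I_G^2\mathfrak m'$ it leaves even the choice of order open. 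None of this is verified.

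The verification scheme you propose is also too restrictive to succeed. You only allow $M_3=x_pM_2/x_q$ with $x_p$ the largest variable at which $M_1$ and $M_2$ differ. Take $M_1=x_1x_{n-3}x_{n-1}$ and $M_2=x_1^2x_{n-2}$ in $I_G\mathfrak m'$. Here $p=n-1$, and neither $x_1x_{n-2}x_{n-1}$ nor $x_1^2x_{n-1}$ is a generator under any regrouping: $n-1$ is adjacent to neither $1$ nor $n-2$ in $G$, and $x_{n-1}\notin\mathfrak m'$. So your test would report an obstruction. Yet the pair is fine: $x_{n-3}\mid M_1/\gcd(M_1,M_2)$, and $x_{n-3}M_2/x_1=(x_1x_{n-2})\,x_{n-3}$ is a generator that is lex-larger than $M_2$.

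For a lex order, the correct form of Lemma \ref{lem-LQ} is this: find \emph{any} $x_w\mid M_1/\gcd(M_1,M_2)$ and $x_q\mid M_2$ with $w>q$ such that $x_wM_2/x_q$ is a generator. The condition $w>q$ alone already gives $M_3>_{\lex}M_2$. The paper uses exactly such non-leading exchanges, for instance in the subcases $(a,l)=(1,n-1)$, $b=n-2$ of Lemmas \ref{IGIF-lemma} and \ref{lem2}.

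The paper also uses a reduction you should adopt. If $M_1$ and $M_2$ share an edge factor, as in (\ref{eq4}) and (\ref{eq7}), cancel it and use the lex order of the smaller product, which transfers by Remark \ref{rmk0.1}. After these reductions, the remaining cases are organised by comparing the largest, second-largest, \ldots\ indices of $M_1$ and $M_2$. That case analysis is what your proposal still has to supply.
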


Before working on the proof of Lemma \ref{lem-main} we recall Theorem \ref{thmA} and demonstrate that Theorem \ref{thmA} follows from Lemma \ref{lem-main}.

\begin{theorem}\label{cor1}
    Let $\overline{c}$ be the positive reside mod $n$ of an integer $c$, and take $a,b\in [n]:=\{1,\ldots,n\}$ such that $\lvert a-b\rvert$ is congruent to $2$ or $-2$ modulo $n$. Let $\mathcal{A}_n$ be the anticycle on $n$ vertices. Let $H$ be the graph obtained by removing the edges $\{a,b\}$ and $\{\overline{a+1},\overline{b+1}\}$ from $\mathcal{A}_n$ and adding the edge $\{\overline{a+1},a\}$. Then $I_H^s$ has linear quotients for $s=2,3$. 
\end{theorem}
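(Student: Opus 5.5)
The plan is to reduce Theorem \ref{cor1} to Lemma \ref{lem-main} by exhibiting a graph isomorphism $H\cong H_n$. Having linear quotients is invariant under relabeling the variables: a permutation $\sigma$ of $[n]$ induces a $K$-algebra automorphism of $S$ sending $I_H^s$ onto $I_{\sigma(H)}^s$. This automorphism carries minimal generators to minimal generators and colon ideals generated by variables to colon ideals generated by variables. So it suffices to find $\sigma$ with $\sigma(H)=H_n$.

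The key point is that the dihedral group of the $n$-cycle $C_n$ acts by automorphisms of $C_n$, hence of its complement $\mathcal{A}_n$. Hence any dihedral relabeling $\sigma$ sends $H$ to the graph obtained from $\mathcal{A}_n$ by removing $\sigma\{a,b\}$ and $\sigma\{\overline{a+1},\overline{b+1}\}$ and adding $\sigma\{\overline{a+1},a\}$. Recall that $H_n$ removes $\{n-2,n\}$ and $\{1,n-1\}$ and adds $\{1,n\}$. Note that $\{1,n\}$ is an edge of $C_n$, so it is correctly an edge absent from $\mathcal{A}_n$. Likewise $\{\overline{a+1},a\}$ is a cycle edge, so adding it is meaningful. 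The removed edges are genuine anticycle edges since $n\geq 7$ and the differences are $\pm 2$.

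I will split into two cases according to the sign of $b-a\equiv\pm 2$.

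\textbf{Case $b\equiv a-2$.} Use the rotation $\sigma(i)=\overline{i+(n-a)}$, so that $a\mapsto n$. Then $b\mapsto n-2$, $\overline{a+1}\mapsto 1$ and $\overline{b+1}\mapsto n-1$. The removed edges become $\{n-2,n\}$ and $\{1,n-1\}$, and the added edge becomes $\{1,n\}$. This is exactly $H_n$.

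\textbf{Case $b\equiv a+2$.} A pure rotation sending $a\mapsto n$ gives removed edges $\{n,2\}$ and $\{1,3\}$ with added edge $\{1,n\}$. Here I instead try the reflection $\tau(i)=\overline{c-i}$, choosing $c$ so that $a\mapsto 1$, i.e. $c=a+1$. Then $\overline{a+1}\mapsto n$, $b=a+2\mapsto n-1$ and $\overline{b+1}\mapsto n-2$. The removed edges become $\{1,n-1\}$ and $\{n,n-2\}$, and the added edge becomes $\{n,1\}$. Again this is $H_n$.

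In both cases Lemma \ref{lem-main} then gives linear quotients of $I_H^s$ for $s=2,3$. The only real obstacle is the bookkeeping of residues modulo $n$. I would double-check each image, including wraparound at $0\equiv n$, and confirm that the residue convention $\overline{c}\in[n]$ is respected.
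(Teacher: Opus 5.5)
Your proposal is correct and follows the same route as the paper: relabel the vertices to reduce to $H_n$, using that linear quotients are preserved under permuting variables, and then invoke Lemma \ref{lem-main}. Your explicit case split is actually more careful than the paper's one-line proof. The paper asserts an isomorphism with $a\mapsto n-2$, $b\mapsto n$, but no such isomorphism can exist, since $b$ has degree $n-4$ in $H$ while $n$ has degree $n-3$ in $H_n$. By contrast, your rotation (for $b\equiv a-2$) and reflection (for $b\equiv a+2$) both check out.
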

\begin{proof}
    By assumption, there is a graph isomorphism sending $a\mapsto n-2$ and $b\mapsto n$, so we may assume that $G = H_n$. Now the result follows from Lemma \ref{lem-main}
\end{proof}

The rest of this manuscript is dedicated to proving Lemma \ref{lem-main}. To prove Lemma \ref{lem-main} we begin by proving the following very general technical lemma, which allows one to produce linear quotient orderings of more complicated ideals from simpler ideals.
\begin{lemma}[Construction of composite linear quotient orderings]\label{lem-tech}
    Let $G_0$ be a graph on $[n-1]$, let $F_0$ be a star graph on $[n]$, and let $s\in\mathbb{Z}_{\geq 2}$, and let $H_0$ be the graph on $[n]$ whose edge set is $E(G_0)\cup E(F_0)$. Assume the following two conditions
    \begin{enumerate}
        \item[(1)] $I_{G_0}^{s-j}I_{F_0}^{j}$ has linear quotients for $0\leq j\leq s-1$, by some ordering $\mathcal{O}_j$ of the minimal monomial generators of $I_{G_0}^{s-j}I_{
        F_0}^j$. Let $\mathcal{O}_s$ be a linear quotient ordering of the minimal monomial generators of $I_{F_0}^s$.
        \item[(2)] Each edge of $G_0$ is adjacent to some edge of $F_0$
    \end{enumerate}
    Then $I_{H_0}^s$ has linear quotients by the ordering
    \begin{equation*}
        \mathcal{O}
        :=
        (\mathcal{O}_s,\mathcal{O}_{s-1},\ldots,\mathcal{O}_{1},\mathcal{O}_0)
    \end{equation*}obtained by concatenating the orderings $\mathcal{O}_s,\ldots,\mathcal{O}_0$.
\end{lemma}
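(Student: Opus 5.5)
The plan is to verify the criterion of Lemma \ref{lem-LQ} directly for the concatenated ordering $\mathcal{O}$. I read the setup as intended in the paper: $F_0$ is a star whose center is the vertex $n$, so its edges are of the form $\{c,n\}$, and $G_0$ lives on $[n-1]$, so $n\notin V(G_0)$. I will state this reading explicitly at the start, since it is what makes the blocks of $\mathcal{O}$ well defined.

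\emph{Step 1: the blocks partition the generators.} Since $I_{H_0}=I_{G_0}+I_{F_0}$, we have
\begin{equation*}
I_{H_0}^s=\sum_{j=0}^s I_{G_0}^{s-j}I_{F_0}^j .
\end{equation*}
Every product of $s$ edges has degree $2s$, so the minimal monomial generators of $I_{H_0}^s$ are exactly these products. The minimal generators of each summand $I_{G_0}^{s-j}I_{F_0}^j$ are also exactly the degree-$2s$ products it contains. Each star edge contributes exactly one factor $x_n$, and no edge of $G_0$ involves $x_n$. Hence every minimal generator of $I_{G_0}^{s-j}I_{F_0}^j$ has $x_n$-degree exactly $j$. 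It follows that the underlying sets of $\mathcal{O}_s,\ldots,\mathcal{O}_0$ are pairwise disjoint and their union is the minimal generating set of $I_{H_0}^s$. So $\mathcal{O}$ is an ordering of the minimal generators in which the $x_n$-degree is weakly decreasing.

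\emph{Step 2: verify Lemma \ref{lem-LQ}.} Take $M_j$ preceding $M_i$ in $\mathcal{O}$, with $M_i\in\mathcal{O}_k$. If $M_j$ also lies in $\mathcal{O}_k$, then hypothesis (1), via Lemma \ref{lem-LQ} applied to $\mathcal{O}_k$, gives an $M_h$ earlier in $\mathcal{O}_k$ with the required property. That $M_h$ is also earlier in $\mathcal{O}$.

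Otherwise $M_j$ lies in some $\mathcal{O}_{k'}$ with $k'>k$. Then $\deg_{x_n}M_j>\deg_{x_n}M_i$, so $x_n$ divides $M_j/\gcd(M_j,M_i)$. Since $k<s$, we can write $M_i=x_ax_b\cdot N$, where $\{a,b\}\in E(G_0)$ and $N$ is a product of $s-k-1$ edges of $G_0$ and $k$ edges of $F_0$. By hypothesis (2), $\{a,b\}$ shares a vertex with some star edge. As $a,b\neq n$, that edge is, after possibly relabeling $a$ and $b$, the edge $\{a,n\}$. Set
\begin{equation*}
M_h:=x_ax_n\cdot N=\frac{x_n}{x_b}M_i .
\end{equation*}
This is a degree-$2s$ product of $s-k-1$ edges of $G_0$ and $k+1$ edges of $F_0$, so $M_h\in\mathcal{O}_{k+1}$ and it precedes $M_i$. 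Moreover $M_h/\gcd(M_h,M_i)=x_n$, which divides $M_j/\gcd(M_j,M_i)$. This is exactly the condition of Lemma \ref{lem-LQ}, so $\mathcal{O}$ is a linear quotient ordering.

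\emph{Main obstacle.} There is essentially no computational difficulty. The only delicate point is the bookkeeping in Step 1: the $x_n$-degree must separate the blocks, and the block $\mathcal{O}_s$ coming from the star must be placed first. Both depend on $n$ being the center of $F_0$ and not a vertex of $G_0$, which is why I make that reading explicit. Once this is fixed, the cross-block case reduces to the single exchange $x_b\mapsto x_n$ supplied by condition (2).
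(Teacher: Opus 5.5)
Your proposal is correct and follows the paper's proof essentially step for step: decompose $I_{H_0}^s$ by the binomial theorem, handle same-block pairs by hypothesis (1), and for cross-block pairs replace an endpoint $x_b$ of a $G_0$-edge of the later monomial by $x_n$ using hypothesis (2), so that the quotient is $x_n$. You also make explicit that the $x_n$-degree separates the blocks, which makes $\mathcal{O}$ well defined and gives $x_n \mid M_j/\gcd(M_j,M_i)$; the paper uses this only implicitly, and it additionally justifies the existence of $\mathcal{O}_s$ via Fr\"oberg's theorem and Herzog--Hibi--Zheng, whereas you may take $\mathcal{O}_s$ as given by the hypothesis.
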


\begin{remark}\label{rmk3}
    We see that property (2) in Lemma \ref{lem-tech} is satisfied when $G_0 = G$ and $F_0 = F$.
\end{remark}

\begin{proof}[proof of Lemma \ref{lem-tech}]
First we establish the claim that $I_{F_0}^s$ has linear quotients. By Fr\"{o}berg's theorem (see \cite{Froberg}) and the fact that $F_0$ is a star we see that $I_{F_0}$ has linear quotients. But then every power of $I_{F_0}$ has linear quotients by Theorem 3.2 \cite{HHZ}.

    Next we establish the claim that the set of minimal generators of $I_{H_0}^s$ is actually the union of the minimal generators of the ideals $I_{G_0}^{i}I_{F_0}^j$, $i,j\geq 0$, $i+j=s$ (this claim was stated explicitly at the end of the lemma). Since $E(H_0) = E(G_0)\cup E(F_0)$ we have that $I_{H_0} = I_{G_0}+I_{F_0}$. Then by the binomial theorem for ideals we have
    \begin{equation*}
        I_{H_0}^s = (I_{G_0}+I_{F_0})^s
        =\sum_{j=0}^sI_{G_0}^{s-j}I_{F_0}^j
    \end{equation*}and the claim follows. 


Let $\mathcal{M}$ be the set of minimal (monomial) generators of $I_{H_0}^s$ and take $M_1,M_2\in \mathcal{M}$ such that $M_1>_{\lex}M_2$ it suffices to find a monomial $M_3$ that works with respect to $M_1$ and $M_2$.
    
    By assumption, the result holds if $M_1,M_2\in \mathcal{O}_j$ for some $j$, so we may assume that $M_1\in \mathcal{O}_u$ and $M_2\in \mathcal{O}_v$ for some $s\geq u>v\geq 0$. Consequently, there exists
    \begin{equation*}
        \begin{split}
            &\{a_1,b_1\},\ldots,\{a_{s-v},b_{s-v}\}, \{i_1,j_1\},\ldots,\{i_{s-u},j_{s-u}\}\in E(G_0)\\
            \text{ and }&\{c_1,n\},\ldots,\{c_v,n\},\{r_1,n\},\ldots,\{r_u,n\}\in E(F_0)
        \end{split}
    \end{equation*}
    such that
    \begin{equation*}
        \begin{split}
            M_1 &= x_{c_1}\cdots x_{c_u}x_n^u
            x_{i_1}x_{j_1}\ldots x_{i_{s-u}}x_{j_{s-u}}\\
            M_2 &= x_{r_1}\cdots x_{r_v}x_n^v
            x_{a_1}x_{b_1}\ldots x_{a_{s-v}}x_{b_{s-v}}.\\
        \end{split}
    \end{equation*}Thus $x_n\mid M_1/\gcd(M_1,M_2)$. By assumption (2), one of $a_1$ or $b_1$ is adjacent to $n$, say $a_1$ is. Let
    \begin{equation*}
        M_3= \frac{x_{a_1}x_n}{x_{a_1}x_{b_1}}M_2.
    \end{equation*}We have that $M_3\in \mathcal{O}_{v+1}$ by construction, so that $M_3$ precedes $M_2$ in the ordering $\mathcal{O}$. On the other hand,
    \begin{equation*}
        M_3/\gcd(M_3,M_2) = x_n\mid M_1/\gcd(M_1,M_2).
    \end{equation*}
    Therefore $M_3$ works and our proof is complete.
\end{proof}

\begin{remark}\label{rmk2.1}
    For $s\geq 2$ we have that
    \begin{equation}
        I_{H_n}^s = \sum_{i+j=s,i,j\geq 0}I_{G}^iI_F^j.
    \end{equation}
\end{remark}
\begin{proof}
    By the definitions of $G$, $F$, and $H_n$ we have that
    \begin{equation*}
        E(H_n)=E(G)\cup E(F)
    \end{equation*}so that $I_{H_n}=I_G+I_F$. Now the result follows from the binomial theorem of ideals.
\end{proof}

From Remark \ref{rmk2.1} we see that
\begin{equation}\label{eq1}
    I_{H_n}^2= I_G^2+I_GI_F+I_F^2
\end{equation}

and
\begin{equation}\label{eq2}
    I_{H_n}^3= I_G^3+I_G^2I_F+I_GI_F^2+I_F^3
\end{equation}


\begin{remark}
     $I_F$ has linear quotients by the lexicographical order.
\end{remark}
\begin{proof}
    The set of minimal generators of $I_F$ is $\mathcal{F} = \{x_ix_n\mid 1\leq i\leq n-3\}$. Thus for $M_1,M_2\in \mathcal{F}$ we have that $M_1/\gcd(M_1,M_2)\in\{x_1,\ldots,x_n\}$.
\end{proof}

\begin{corollary}\label{cor3}
    $I_F^s$ has linear quotients for all $s\geq 1$.
\end{corollary}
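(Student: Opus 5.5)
The plan is to combine the preceding remark, which shows that $I_F$ has linear quotients with respect to the lexicographical order, with the Herzog--Hibi--Zheng theorem (Theorem 3.2 \cite{HHZ}) quoted in Section \ref{sec3}. That theorem says that a quadratic monomial ideal with linear quotients has linear quotients in every power.

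First, I will check that $I_F$ is a quadratic monomial ideal. Its minimal generators are $x_ix_n$ for $1\leq i\leq n-3$, all of degree two. Next, I will record that $I_F$ has linear quotients, which is exactly the preceding remark. For completeness, I would re-verify this via Lemma \ref{lem-LQ}. For any two generators $M_j=x_kx_n$ and $M_i=x_lx_n$ with $k\neq l$, the quotient $M_j/\gcd(M_j,M_i)=x_k$ is already a variable, so we may take $h=j$. Hence every colon ideal $(M_1,\ldots,M_{i-1}):M_i$ is generated by variables.

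Then Theorem 3.2 \cite{HHZ} gives that $I_F^s$ has linear quotients for all $s\geq 1$. The case $s=1$ is the remark itself.

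Alternatively, one can argue as in the first paragraph of the proof of Lemma \ref{lem-tech}. There, $F$ is a star and hence cochordal, so Fr\"oberg's theorem \cite{Froberg} again gives linear quotients of $I_F$ before applying \cite{HHZ}.

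A fully self-contained direct argument is also available, since $I_F^s=x_n^s\,(x_1,\ldots,x_{n-3})^s$. Multiplying by the monomial $x_n^s$ does not change the colon ideals, and a power of an ideal generated by variables has linear quotients in lex order. I do not expect any step to be a genuine obstacle. The only point needing care is confirming that the hypotheses of \cite{HHZ} (quadratic and having linear quotients) are met, which is immediate.
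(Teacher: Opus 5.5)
Your proposal is correct and takes essentially the same approach as the paper: establish linear quotients for the quadratic ideal $I_F$, then apply Theorem 3.2 of \cite{HHZ}. For that first step the paper uses Fr\"oberg's theorem (a star is cochordal), which you give as your alternative rather than your main route; your self-contained argument via $I_F^s=x_n^s(x_1,\ldots,x_{n-3})^s$ is also valid and avoids citing \cite{HHZ}.
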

\begin{proof}
    Since $F$ is a star, it is cochordal. Thus $I_F$ has linear quotients. Now the result follows since $I_F$ is a quadric monomial ideal.
\end{proof}

\begin{remark}
     $I_F^2$ has linear quotients by the lexicographical order.
\end{remark}
\begin{proof}
    The set of minimal generators of $I_F$ is $\mathcal{F} = \{x_ix_n\mid 1\leq i\leq n-3\}$. Let $M_1,M_2\in \mathcal{F}$ such that $M_1>_{\lex}M_2$. Write $M_1 = x_ix_nx_jx_n$ and $M_2 = x_ax_nx_bx_n$ with $j\geq i$ and $b\geq a$. Then either $j=b$ and $i>a$, so that $M_3:= x_ix_bx_n^2$ works, or $j>b$ and $M_3:=x_ax_jx_n^2$ works.
\end{proof}

\begin{lemma}\label{IGIF-lemma}
    $I_GI_F$ has linear quotients by the lexicographical order.
\end{lemma}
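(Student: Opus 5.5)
Set $G=\mathcal{A}_{n-1}$ on $[n-1]$, with edges $\{i,j\}$ for $|i-j|\notin\{0,1,n-2\}$. Set $F$ to be the star with edges $\{i,n\}$, $1\le i\le n-3$. Every minimal generator of $I_GI_F$ then has the form
\begin{equation*}
M = x_ax_bx_cx_n,\qquad \{a,b\}\in E(G),\quad 1\le c\le n-3 .
\end{equation*}
Each such generator is divisible by $x_n$ exactly once, since $n\notin V(G)$. Condition (iii) of Notation \ref{not1} therefore only involves the variables $x_1,\dots,x_{n-1}$. Concretely, $M_2/\gcd(M_1,M_2)$ and $M_1/\gcd(M_1,M_2)$ are computed from the cubic parts $x_ax_bx_c$. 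Lex order on generators with $x_n$ removed is lex order on these cubics with $x_{n-1}>\dots>x_1$.

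Fix generators $M_1>_{\lex}M_2$ and write $M_2=x_ax_bx_cx_n$. By the definition of lex, there is a largest index $k$ whose exponent in $M_1$ exceeds its exponent in $M_2$; at larger indices the two exponents agree. Hence $x_k\mid M_1/\gcd(M_1,M_2)$, and I will look for $M_3=(x_k/x_\ell)M_2$ with $x_\ell\mid M_2$ and $\ell<k$. Such an $M_3$ is automatically lex-larger than $M_2$ and satisfies (ii) and (iii). The only requirement is that $M_3$ be a generator of $I_GI_F$, i.e. that the cubic part of $M_3$ splits as an edge of $G$ times a vertex in $[n-3]$. Since $M_1$ agrees with $M_2$ at indices above $k$ and has more $x_k$, the multiset $\{a,b,c\}$ contains some element smaller than $k$.

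Case analysis on which factor to replace:
\begin{enumerate}
\item[(a)] If $k\le n-3$ and $c<k$, take $\ell=c$. Then $M_3=x_ax_bx_kx_n$ is a generator.
\item[(b)] Otherwise, replace the smaller endpoint of the edge. Say $a<k$; try $M_3=x_kx_bx_cx_n$, which requires $\{k,b\}\in E(G)$, i.e. $k\ne b$, $|k-b|\ne1$, and $\{k,b\}\ne\{1,n-1\}$.
\item[(c)] If (b) fails, reorganize the cubic. For example, re-pair $k$ with $c$ or $a$ and use the remaining vertex as the star leaf, provided that leaf lies in $[n-3]$.
\end{enumerate}

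The obstacle is the bad configurations in (b)–(c), where $k$ is adjacent on the cycle to the remaining vertices, or where $k\in\{n-2,n-1\}$ so $x_k$ cannot be the star leaf. Here I will use the support of $M_1$. $M_1$ contains $x_k$ together with an edge of $G$ and a leaf, and it agrees with $M_2$ above $k$. With $n\ge7$, the anticycle has many edges, so among the at most three vertices $a,b,c$ some vertex $\ell<k$ can be dropped so that the cubic part of $M_3$ contains a non-consecutive pair plus a vertex $\le n-3$.

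If some configuration still resists, for instance $M_2=x_1x_{n-2}^2x_n$-type monomials near the missing edges of $F$, I will instead take $M_3$ differing from $M_2$ in a variable $x_{k'}$ with $k'<k$. This uses a smaller divisor of $M_1/\gcd(M_1,M_2)$ and still precedes $M_2$, because it is larger in a position where $M_1$ exceeds $M_2$. I would enumerate these boundary cases explicitly, modulo the cyclic symmetry broken only at the vertices $1,n-2,n-1$.
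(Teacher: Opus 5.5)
There is a genuine gap. Your reduction is sound. If $k$ is the largest index with $\ord_{x_k}M_1>\ord_{x_k}M_2$, then any generator of the form $(x_k/x_\ell)M_2$ with $\ell<k$ works; this is the same projection mechanism the paper uses. But the content of the lemma is showing that such a projection actually lies in $I_GI_F$, and your one general claim about this (``some vertex $\ell<k$ can be dropped'') is false.

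Take $M_1=x_2x_{n-1}\cdot x_2x_n$ and $M_2=x_1x_{n-2}\cdot x_1x_n$. Then $k=n-1$, and the only projections are $x_1x_{n-2}x_{n-1}x_n$ and $x_1^2x_{n-1}x_n$. Neither is a generator: $n-2$ and $n-1$ cannot be leaves of $F$, and neither $\{n-2,n-1\}$ nor $\{1,n-1\}$ is an edge of $\mathcal{A}_{n-1}$. Since $M_1/\gcd(M_1,M_2)=x_2^2x_{n-1}$, you are forced to use $x_2$, e.g.\ $M_3=x_1x_{n-2}\cdot x_2x_n$.

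Your fallback gestures at this, but it has three problems. You give no argument that a suitable $k'$ always exists. The monomial you name as the trouble spot, $x_1x_{n-2}^2x_n$, is not a generator of $I_GI_F$ at all. And your reason for the fallback $M_3$ preceding $M_2$ omits the essential condition: you need $\ord_{x_{k'}}M_1>\ord_{x_{k'}}M_2$ together with removing some $x_\ell$ with $\ell<k'$.

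The gap can be closed inside your framework. Compare the sorted cubic parts and let $m$ be the first position where they differ.
\begin{itemize}
\item If $m=3$, take $M_3=M_1$.
\item If $m=2$ and the common top entry $A$ is $n-2$ or $n-1$, then $A$ occurs once, as an edge endpoint, in both monomials. This forces $k\le n-3$, and replacing the leaf of $M_2$ (which is $<k$) by $x_k$ works.
\item If $m=2$ and $A\le n-3$, every entry is an admissible leaf and edges are just pairs at distance $\ge 2$. If both projections failed, then $M_2=x_{k-1}^2x_kx_n$, which contains no edge.
\item If $m=1$ and $k\le n-3$, replace the leaf of $M_2$.
\item If $m=1$ and $k=n-2$, replace the larger endpoint $b$ of the edge of $M_2$; then $a\le n-5$ is not cycle-adjacent to $n-2$.
\item If $m=1$ and $k=n-1$, use that $\{x,n-1\}\in E(G)$ iff $2\le x\le n-3$. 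Some projection works unless $M_2=x_1^2x_{n-2}x_n$. In that case the partner $i\in[2,n-3]$ of $n-1$ in $M_1$ does not divide $M_2$, and $M_3=x_1x_{n-2}\cdot x_ix_n$ works.
\end{itemize}
This single exception is what the paper runs into in its subcase $(a,l)=(1,n-1)$, $b=n-2$, where it likewise abandons $x_{n-1}$ for $x_i$. The paper splits cases by which factor of $M_1$ is largest; your first-difference organization, once completed as above, gives a shorter argument.
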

\begin{proof}
    Let $\mathcal{O}$ be the set of minimal (monic) monomial generators of $I_GI_F$ ordered by the lexicographical order. We have that
    \begin{equation*}
        \mathcal{O} = \{x_{i_1}x_{i_2}x_{i_3}x_n\mid
        1\leq i_3\leq n-3, \{i_1,i_2\}\in E(\mathcal{A}_{n-1})\}.
    \end{equation*}
    Fix $M_1,M_2\in \mathcal{O}$ such that $M_1>_{\lex}M_2$ and write
    \begin{equation*}
        \begin{split}
            M_1 &= x_ix_lx_jx_n\\
            M_2 &= x_ax_bx_cx_n
        \end{split}
    \end{equation*}
    where $\{i,l\},\{a,b\}\in E(\mathcal{A}_{n-1})$, $1\leq j,c\leq n-3$, $a<b$ and $i<l$. We must show that there exists $M_3\in\mathcal{O}$ which works with respect to $M_1$ and $M_2$. By Remark \ref{lem1} we may assume that $\deg(\gcd(M_1,M_2))\leq 2$, that is
    \begin{equation}\label{eq3}
        \#\{i,j,l\}\cap\{a,b,c,\}\leq 1.
    \end{equation}
    Additionally, since $M_1>_{\lex}M_2$ we have $(\dagger)$: $\max\{i,j,l\}\geq \max\{a,b,c\}$.

Now suppose that $j=\max\{i,j,l\}$ and $j\notin \{a,b,c\}$. We have $j>c$ by $(\dagger)$. Also $x_j\mid M_1/\gcd(M_1,M_2)$. Thus $M_3:= \frac{x_jx_n}{x_cx_n}M_2$ works.

Next suppose that $j=\max\{i,j,l\}$ and $j\in \{a,b,c\}$. By $(\dagger)$ and $a<b$, we have $j\in\{b,c\}$. Since $a<b$ we have $j\neq a$. Let $s = \max\{i,l\}$ and $q = \max\{a,c\}$. We have $s> q$ by the combined facts of (\ref{eq3}) and $M_1>_{\lex}M_2$. If $j=  b$, then $n-2\geq j\geq s>c$ so that $M_3:= \frac{x_sx_n}{x_cx_n}M_2$ works. Thus we can assume that $j = c$. then by $(\dagger)$ we have $l>b$. We have
\begin{equation*}
        \begin{split}
            M_1 &= x_ix_jx_lx_n\\
            M_2 &= x_ax_jx_bx_n
        \end{split}
    \end{equation*}
    so that $M_3:= \frac{x_lx_n}{x_bx_n}M_2$ works.

We have proven that the lemma holds when $j = \max\{i,j,l\}$ $(=\max\{j,l\})$. Thus we may assume that $l = \max\{i,j,l\}$. Note that $l\neq a$ by $(\dagger)$. Suppose that $l\notin\{b,c\}$. If $(a,l)\neq (1,n-1)$ then $M_3:=\frac{x_ax_l}{x_ax_b}M_2$ works, so we can assume that $(a,l)=(1,n-1)$. Then we have $i\neq 1$, since $\{i,n-1\}\in E(\mathcal{A}_{n-1})$. Now if $b\neq n-2$, then $M_3:=\frac{x_lx_b}{x_ax_b}M_2$ works, so we may further assume that $b=n-2$. If $i\neq n-3$ then $M_3:=\frac{x_ix_{n-2}}{x_1x_{n-2}}M-2$ works, so we can assume in addition that $i = n-3$. If $c\neq n-3$ then $M_3:= \frac{x_{n-3}x_n}{x_cx_n}M_2$ works, so we can assume further that $c=n-3$ Now we have
\begin{equation*}
        \begin{split}
            M_1 &= x_{n-3}x_{n-1}x_jx_n\\
            M_2 &= x_1x_{n-2}x_{n-3}x_n
            = x_{n-3}x_{n-2}\cdot x_1x_n
        \end{split}
    \end{equation*}
    so that $M_3:= \frac{x_{n-1}}{x_{n-2}}M_2 = x_{n-3}x_{n-1}x_1x_n$ works.

Now we have reduced to the case that $l = \max\{i,j,l\}$ and $l\in\{b,c\}$. Suppose that $l = b$. Then by (\ref{eq3}) we have that $\{i,j\}\cap \{a,c\} = \emptyset$. Since $M_1>_{\lex}M_2$ we also have that $\max\{i,j\}>\max\{a,c\}$. Thus either $i>a$ or $j>c$. If $i>a$, then $M_3:=\frac{x_ix_l}{x_ax_l}M_2$ works. If $j>a$, then $M_3:=\frac{x_jx_n}{x_cx_n}M_2$ works.

Now we can assume that $c=l = \max\{i,j,l\}$. Since $\max\{i,j\}>\max\{a,c\}$. Let $z =\max\{i,j\}\leq n-3$. Then, $z>a,b$ so that $\frac{x_ax_z}{x_ax_b}$ works. This completes the proof of the lemma.
\end{proof}

\begin{corollary}\label{cor2}
    $I_{H_n}^2$ has linear quotients.
\end{corollary}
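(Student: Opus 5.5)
The plan is to apply Lemma \ref{lem-tech} with $G_0=G=\mathcal{A}_{n-1}$ on $[n-1]$, $F_0=F$ the star with center $n$ and leaves $1,\ldots,n-3$, and $s=2$. By Remark \ref{rmk2.1} (equivalently (\ref{eq1})), $I_{H_n}^2=I_G^2+I_GI_F+I_F^2$, so the minimal generators of $I_{H_n}^2$ are exactly the union of the minimal generators of these three ideals. Condition (2) of Lemma \ref{lem-tech} holds by Remark \ref{rmk3}. To see this directly: an edge $\{a,b\}$ of $\mathcal{A}_{n-1}$ with $a<b$ has $b-a\geq 2$, so $a\leq n-3$, and hence $\{a,n\}\in E(F)$ is adjacent to it.

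Next I supply the orderings required by condition (1). I take $\mathcal{O}_2$ to be the lexicographic order on the generators of $I_F^2$; it is a linear quotient ordering by the remark preceding Lemma \ref{IGIF-lemma}, and Corollary \ref{cor3} would also do. I take $\mathcal{O}_1$ to be the lexicographic order on the generators of $I_GI_F$, which is a linear quotient ordering by Lemma \ref{IGIF-lemma}. For $\mathcal{O}_0$ I need a linear quotient ordering of $I_G^2=I_{\mathcal{A}_{n-1}}^2$. Since $n\geq 7$ gives $n-1\geq 6\geq 5$, this follows from Theorem 5.6 of \cite{BDMSM}.

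With these choices, Lemma \ref{lem-tech} shows that the concatenation $(\mathcal{O}_2,\mathcal{O}_1,\mathcal{O}_0)$ is a linear quotient ordering of $I_{H_n}^2$, which proves the corollary.

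The main obstacle is the $j=1$ ordering, since the cross term $I_GI_F$ is not covered by any general theorem. That input is already supplied by Lemma \ref{IGIF-lemma}, so the remaining work is only checking hypotheses. The one point needing care is that the proof of Lemma \ref{lem-tech} replaces an edge of $M_2$ by an edge through $n$ and lands in $\mathcal{O}_{v+1}$. This requires the new product to be a minimal generator of $I_G^{1-v}I_F^{v+1}$. That holds here because all generators of $I_{H_n}^2$ have degree $4$, so every degree-$4$ product of edges is a minimal generator.
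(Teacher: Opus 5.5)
Your proposal is correct and is essentially the paper's own proof. The paper also applies Lemma~\ref{lem-tech} with $G_0=G$, $F_0=F$, $s=2$, ordering $I_G^2$ by the linear quotient order from \cite{BDMSM} (it cites Theorem 5.4 there, while you cite Theorem 5.6, the version recalled in the preliminaries), $I_GI_F$ by the lexicographic order of Lemma~\ref{IGIF-lemma}, and $I_F^2$ by its lexicographic order; your extra checks (the smaller endpoint of each edge of $\mathcal{A}_{n-1}$ is at most $n-3$, and the swapped monomial is a minimal generator in $\mathcal{O}_{v+1}$ since everything is generated in degree $4$) are correct details the paper leaves implicit.
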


\begin{proof}
    The result follows from Lemma \ref{lem-tech} by taking $G_0 = G$, $F_0 = F$, $H_0 = H_n$, equipping the minimal generators of $I_G^2$ with the linear quotient order of Theorem 5.4 \cite{BDMSM}, equipping the minimal generators of $I_GI_F$ with the lexicographical order, and equipping the minimal generators of $I_F^2$ with its lexicographical order.
\end{proof}

We will prove the next two lemmas in Section \ref{sec6}. These are the remaining two ingredients we need to prove Lemma \ref{lem-main}

\begin{lemma}\label{lem2}
    $I_GI_F^2$ has linear quotients by the lexicographical order.
\end{lemma}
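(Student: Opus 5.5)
The plan is to adapt the proof of Lemma \ref{IGIF-lemma}, with one extra free variable on the $F$-side. Every minimal generator of $I_GI_F^2$ has the form $x_ix_l\,x_jx_k\,x_n^2$ with $\{i,l\}\in E(\mathcal{A}_{n-1})$ and $1\le j\le k\le n-3$.

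The common factor $x_n^2$ changes neither the quotients $M_1/\gcd(M_1,M_2)$ nor the relative lexicographic order. So I would first strip it and work with the ideal
\begin{equation*}
J=I_G\cdot(x_1,\ldots,x_{n-3})^2 .
\end{equation*}
By the criterion in Notation \ref{not1}, it suffices to do the following. Take $M_1=x_ix_lx_jx_k>_{\lex}M_2=x_ax_bx_cx_d$ with $i<l$, $a<b$, $j\le k$, $c\le d$. Then produce a generator $M_3=\frac{x_p}{x_q}M_2>_{\lex}M_2$ with $x_p\mid M_1/\gcd(M_1,M_2)$ and $M_3\in J$.

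First, by Remark \ref{lem1} I may assume that $\deg\gcd(M_1,M_2)\le 2$. Second, the lexicographic inequality gives an analogue of $(\dagger)$: the largest index in $M_1$ not cancelled by $M_2$ exceeds the corresponding index in $M_2$.

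The basic move is a swap on the $F$-side. Suppose some index $p\le n-3$ has $x_p$ dividing $M_1/\gcd(M_1,M_2)$ and $p>c$ (or $p>d$). Then replacing $x_c$ (or $x_d$) by $x_p$ stays in $J$, because the $F$-side only requires indices $\le n-3$. The replacement is lexicographically larger, so $M_3$ works.

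The trouble comes when this move fails. Then every uncancelled index of $M_1$ that is at most $n-3$ is at most $c$. So the lex-excess of $M_1$ must come from its $G$-edge, and in particular $l$ must be large (typically $l\in\{n-2,n-1\}$, or at least $l>\max(b,d)$). In that case I would swap on the $G$-side, replacing $x_a$ or $x_b$ by $x_l$. This needs $\{l,b\}$ or $\{a,l\}$ to be an edge of $\mathcal{A}_{n-1}$, which fails only when the pair is adjacent in the cycle $C_{n-1}$: consecutive indices, or the pair $\{1,n-1\}$.

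I also have the additional freedom of regrouping $M_2$. If $a$ (or $b$) is at most $n-3$, I can move it to the $F$-side and pair $l$ with $c$ or $d$ as a new $G$-edge, provided $\{c,l\}$ is a non-cycle pair. With four variables in $M_2$ there are several candidate partners for $l$. Since a vertex has only two cycle neighbours, at least one partner should be available in all but a handful of configurations.

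The main obstacle is the residual boundary cases, as in the end of the proof of Lemma \ref{IGIF-lemma}. These are the cases where $l\in\{n-2,n-1\}$ and all candidate partners in $M_2$ are cycle neighbours of $l$, namely among $\{1,n-3,n-2\}$ when $l=n-1$. Examples are $M_2$ built from $x_1x_{n-2}$ together with $x_{n-3}$'s. I would pin down these configurations one constraint at a time, exactly as before. In each one I would then exhibit $M_3$ directly, either swapping out a copy of $x_{n-3}$ for an uncancelled $x_i$ or $x_j$, or using $x_{n-1}/x_{n-2}$ with the regrouping $x_{n-3}x_{n-1}\cdot x_1x_{\ast}$. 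The degree bound $\deg\gcd\le 2$ keeps the number of such cases finite and small.
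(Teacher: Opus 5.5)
Your reductions are sound. Neither $I_G$ nor $(x_1,\ldots,x_{n-3})$ involves $x_n$, so multiplying by $x_n^2$ is a lex-preserving bijection of minimal generators that leaves every colon ideal unchanged. Exhibiting $M_3=x_pM_2/x_q$ with $q<p$, $x_p\mid M_1/\gcd(M_1,M_2)$ and $M_3\in J$ is enough, and the swap strategy is the same as the paper's.

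\textbf{The gap.} The proposal stops exactly where the content of the lemma lies. The residual cases are announced rather than checked, and the statements meant to confine them are inaccurate.

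First, failure of the $F$-swap does not force $l>\max(b,d)$. Take $n=8$, $M_1=x_2x_4x_5^2$ and $M_2=(x_1x_3)(x_5x_5)$, both times $x_8^2$. Then $\gcd(M_1,M_2)=x_5^2$, the $F$-swap fails because $c=d=5$, yet $l\in\{4,5\}$. The true statement is this. Let $p$ be the largest index at which the exponents of $M_1$ and $M_2$ differ. If $p\le\min(n-3,c)$, then every variable of $M_2$ of index at least $p$ divides the gcd. Hence $\gcd(M_1,M_2)=x_cx_d$ and $a<b<p$, and $x_pM_2/x_b$ works.

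Second, the obstruction is not that $l$ and its partner are cycle neighbours; note that $n-3$, which you list, is not a cycle neighbour of $n-1$. The real constraint is that the two indices outside the new $G$-edge must lie in $[1,n-3]$. Consequently $x_{n-2}x_{n-1}$ divides no generator, and for $M_2=x_1^3x_{n-2}$ no swap inserting $x_{n-1}$ works at all.

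\textbf{How to close it.} Use an explicit membership criterion. A degree-$4$ monomial is a minimal generator of $J$ if and only if it is one of:
\begin{enumerate}
\item[(A)] all four indices lie in $[1,n-3]$, with largest minus smallest at least $2$;
\item[(B)] $x_{n-1}$ times three indices from $[1,n-3]$, not all equal to $1$;
\item[(C)] $x_{n-2}$ times three indices from $[1,n-3]$, not all equal to $n-3$.
\end{enumerate}
With $p$ as above, $x_pM_2/x_q\in J$ for the following choices of $q$.
\begin{enumerate}
\item[(a)] If $p=n-1$ and $M_2$ is of type (A), take $q$ the smallest index of $M_2$.
\item[(b)] If $p=n-1$ and $M_2$ is of type (C), take $q=n-2$, unless $M_2=x_1^3x_{n-2}$.
\item[(c)] If $p=n-2$, then $M_2$ is of type (A); take $q$ the largest index of $M_2$.
\item[(d)] If $p\le n-3$ and $M_2$ is of type (B) or (C), take any index $q<p$ of $M_2$. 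For type (C), a failure would force $M_1=x_{n-2}x_{n-3}^3\notin J$.
\item[(e)] If $p\le n-3$ and $M_2$ is of type (A), take an index $q<p$ that is not a unique minimum of $M_2$ if one exists, otherwise the minimum. A failure would force $M_1$ to be supported on $\{p,p+1\}$, and no generator is.
\end{enumerate}
The only exception is $M_2=x_1^3x_{n-2}$ with $x_{n-1}\mid M_1$. There, type (B) gives $x_w\mid M_1$ with $2\le w\le n-3$, and $x_wM_2/x_1=(x_1x_{n-2})(x_1x_w)$ works.

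This replaces the open-ended boundary analysis in your last paragraph with a finite, fully verified list.
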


\begin{lemma}\label{lem3}
    $I_G^2I_F$ has linear quotients by the lexicographical order.
\end{lemma}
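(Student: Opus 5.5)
We need to prove that $I_G^2I_F$ has linear quotients by the lexicographical order. Recall that $G=\mathcal{A}_{n-1}$ on $[n-1]$ and $F$ is the star with edges $\{i,n\}$ for $1\le i\le n-3$.

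The plan is to mimic the proof of Lemma \ref{IGIF-lemma}. First we describe the minimal generators:
\begin{equation*}
\mathcal{O}=\{x_{a_1}x_{b_1}x_{a_2}x_{b_2}x_cx_n \mid \{a_k,b_k\}\in E(\mathcal{A}_{n-1}),\ 1\le c\le n-3\}.
\end{equation*}
Every such monomial has $x_n$-degree exactly one, so $x_n$ always cancels in $\gcd$ computations. Fix $M_1>_{\lex}M_2$ in $\mathcal{O}$. By Notation \ref{not1} and Lemma \ref{lem-LQ}, it suffices to produce $M_3\in\mathcal{O}$ with $M_3>_{\lex}M_2$, $M_3/\gcd(M_3,M_2)=x_p$, and $x_p\mid M_1/\gcd(M_1,M_2)$. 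By Remark \ref{lem1} we may assume $\deg\gcd(M_1,M_2)\le 4$.

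The key tool is the following move. Let $x_p$ be the largest variable dividing $M_1/\gcd(M_1,M_2)$. Since $M_1>_{\lex}M_2$, the variable $x_p$ exceeds every variable in $M_2/\gcd(M_1,M_2)$. We therefore try to replace some smaller index $q$, with $x_q\mid M_2/\gcd(M_1,M_2)$, by $p$ so that the result stays in $\mathcal{O}$. Any such replacement is automatically lex-larger than $M_2$. There are three possible positions for $q$:
\begin{enumerate}
\item[(a)] $q=c$, the star vertex. This is allowed exactly when $p\le n-3$, giving $M_3=\frac{x_p}{x_c}M_2$.
\item[(b)] $q$ is an endpoint of a $G$-edge $\{q,b_k\}$ of $M_2$. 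This is allowed when $\{p,b_k\}\in E(\mathcal{A}_{n-1})$, i.e. $p\ne b_k$ and $|p-b_k|\not\equiv \pm1 \pmod{n-1}$.
\item[(c)] Either of the above after re-pairing the four $G$-vertices of $M_2$ into a different pair of anticycle edges. For $n-1\ge 6$ there is a lot of room for this.
\end{enumerate}

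The case analysis is organized by where $p$ sits in $M_1$: as its star vertex, or as an endpoint of one of its $G$-edges. It is subdivided by whether $p\le n-3$ or $p\in\{n-2,n-1\}$, exactly as in Lemma \ref{IGIF-lemma}.

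When $p\le n-3$ and $c\notin\gcd$, move (a) works immediately. When $p\in\{n-2,n-1\}$, $p$ cannot be the star vertex, so we need move (b) or (c). Move (b) fails only when each $G$-vertex $q$ of $M_2$ available for replacement has its partner $b_k$ in $\{p-1,p,p+1\}$ cyclically, i.e. in $\{n-3,n-2,1\}$ or $\{n-2,n-1\}$. In those configurations we exploit the remaining variables:
\begin{itemize}
\item re-pair $M_2$ using its other edge, or
\item use the second-largest variable of $M_1/\gcd(M_1,M_2)$ together with move (a), or
\item use an $M_3$ of the form $\frac{x_p}{x_{p-1}}M_2$, as in the final step of the $(a,l)=(1,n-1)$ case of Lemma \ref{IGIF-lemma}.
\end{itemize}

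I expect this last family of boundary cases to be the main obstacle. These are the cases where $M_2$ contains $x_1$, $x_{n-3}$ and $x_{n-2}$ in rigid edge configurations like $x_1x_{n-2}\cdot x_{n-3}x_{k}$, with $c=n-3$. A lot of branches must be checked by hand there, and the extra $G$-edge compared with Lemma \ref{IGIF-lemma} multiplies the number of pairings. My approach is to first enumerate the finitely many patterns near the vertices $\{1,n-3,n-2,n-1\}$ where moves (a) and (b) both fail. For each such pattern I will exhibit $M_3$ explicitly, using $n\ge 7$ to guarantee an anticycle edge from a non-boundary vertex. As a sanity check against missed cases, I would confirm lex linear quotients for $n=7,8$ in Macaulay2.
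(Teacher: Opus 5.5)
\textbf{Verdict: there is a genuine gap.} Your framework is the right one, and it is the mechanism the paper uses. Taking $x_p$ to be the largest variable of $M_1/\gcd(M_1,M_2)$ and trading a smaller variable of $M_2$ for it is exactly the ``lexicographic projection'' of Lemma \ref{lem1.8}. (You may in fact trade \emph{any} $x_q\mid M_2$ with $q<p$, not only one dividing $M_2/\gcd(M_1,M_2)$; for example, move (a) works whenever $c<p\le n-3$.)

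The gap is that proving the lemma consists precisely of checking that some working $M_3$ exists in every configuration, and you defer all of that. You assert without argument that failures are confined to finitely many patterns near $\{1,n-3,n-2,n-1\}$. You list three possible remedies without showing that one of them always applies. The Macaulay2 check for $n=7,8$ proves nothing for general $n$. Your obstruction sets are also garbled: the non-neighbours of $n-1$ in $\mathcal{A}_{n-1}$ are $n-2$ and $1$, and those of $n-2$ are $n-3$ and $n-1$.

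The paper supplies the content you are missing:
\begin{enumerate}
\item[(i)] the reduction (\ref{eq7}): factor out a shared $G$-edge and apply Lemma \ref{IGIF-lemma};
\item[(ii)] Remark \ref{rmk1.7}: raising the larger endpoint of a $G$-edge to any index $\le n-2$ keeps it an edge;
\item[(iii)] an explicit eight-way case split on where the largest indices $u,v$ of $M_1,M_2$ sit, comparing the second, third and fourth largest indices when these tie.
\end{enumerate}

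The boundary enumeration is not a formality, as the following pair shows. Take $M_2=(x_1x_{n-2})^2(x_1x_n)=x_1^3x_{n-2}^2x_n$ and $M_1=(x_2x_4)(x_2x_{n-1})(x_1x_n)$. Then $M_1/\gcd(M_1,M_2)=x_2^2x_4x_{n-1}$, so $p=n-1$. Neither $x_{n-1}M_2/x_1$ nor $x_{n-1}M_2/x_{n-2}$ lies in $\mathcal{O}$ under any pairing. The reason is that $n-1$ cannot be the star vertex, and its only available partners, $1$ and $n-2$, are non-neighbours. So no move with the largest variable exists at all. Your first remedy (re-pairing) and your third remedy ($\frac{x_p}{x_{p-1}}M_2$) both fail. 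Only the second survives: one must switch to a smaller variable, e.g.\ $M_3=(x_1x_{n-2})^2(x_2x_n)$. In general, use the partner $j\in[2,n-3]$ of $n-1$ in $M_1$ in place of the star vertex $1$.

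The paper's own case (I) ends with $\frac{x_{n-1}}{x_{n-2}}M_2=x_1x_n\cdot x_1x_{n-2}\cdot x_tx_{n-1}$. That monomial requires $t\neq 1$, and it breaks on exactly this configuration. To turn your plan into a proof, you must actually carry out the enumeration. Where $x_{n-1}$ or $x_{n-2}$ cannot be inserted into $M_2$, you must explicitly allow a non-maximal variable of $M_1/\gcd(M_1,M_2)$.
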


We see that Lemma \ref{lem-main} follows from \ref{lem-tech}, Remark \ref{rmk3}, Corollary \ref{cor2}, Corollary \ref{cor3}, Lemma \ref{lem2}, and Lemma \ref{lem3}.

\section{Proof of Theorem \ref{thmA}: Demonstration of the Main Lemmas}\label{sec6}

In this section we finish the proof of Lemma \ref{lem-main}, and hence of Theorem \ref{cor1}, by proving Lemmas \ref{lem2} and \ref{lem3}. Before proving these lemmas we establish several facts about the lexicographical order of monomials from Notation \ref{df1}, which will be useful throughout the proof of Lemmas \ref{lem2} and \ref{lem3}.

We begin by proving Lemma \ref{lem1.8} which is a general machinery for obtaining linear quotient orderings of monomial ideals, followed by a few remarks. Lemma \ref{lem1.8} will be used a multitude of times implicitly throughout the proofs of Lemmas \ref{lem2} and \ref{lem3}. For a monomial $M\in S$ and $1\leq i\leq n$, let $\ord_{x_i}M$ be the largest natural number $m$ such that $x_i^m\mid M$.

\begin{lemma}[Lexicographical projections]\label{lem1.8} Let $M_1$ and $M_2$ be monic monomials in $S$ of the same degree $s\geq 3$ such that $M_1>_{\lex}M_2$. Write
\begin{equation*}
    \begin{split}
        M_1 &= x_{i_1}\cdots x_{i_s}\\
        M_2 &= x_{j_1}\cdots x_{j_s}
    \end{split}
\end{equation*}
where $i_1\leq \ldots \leq i_s$ and $j_1\leq \ldots \leq j_s$. For $1\leq t\leq s$ say that $M_1$ and $M_2$ agree in order $t$ if $i_l=j_l$ for $s-t+1\leq l\leq s$ and $i_{s-t}\neq j_{s-t}$. Say that $M_1$ and $M_2$ agree in order zero if $i_s>j_s$. Then we have the following statements.

\begin{enumerate}
    \item[(i)]There is a unique natural number $t= t(M_1,M_2)$ such that $M_1$ and $M_2$ agree in order $t$.
    \item[(ii)] Take $t = t(M_1,M_2)$ and let $j\in \{1,\ldots,s-t\}$ and let $M_3 = \frac{x_{i_{s-t}}}{x_j}M_2$. Then $M_1>_{\lex}M_3$, $M_3/\gcd(M_3,M_2)\in\{x_1,\ldots,x_n\}$, and $M_3/\gcd(M_3,M_2)\mid M_1/\gcd(M_1,M_2)$.
    \item[(iii)] Take $j$ and $M_3$ as in (ii). Let $\mathcal{O}$ be the lexicographical ordering on a given set $\mathcal{E}$ of monomials in $S$. In the terminology of Notation \ref{not1} we have that $M_3$ works in $\mathcal{O}$ with respect to $M_1$ and $M_2$ if and only if $M_3\in \mathcal{E}$.
\end{enumerate}
\end{lemma}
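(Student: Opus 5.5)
The plan is to argue directly from the sorted exponent sequences. For (i), I would compare the sequences $(i_s,\ldots,i_1)$ and $(j_s,\ldots,j_1)$ from the top. Since $M_1>_{\lex}M_2$ and both monomials have degree $s$, the sequences differ, so there is a largest position $p$ with $i_p\neq j_p$. Setting $t=s-p$ gives agreement in order $t$. In the case $p=s$ we have $i_s\neq j_s$, and then lex-dominance forces $i_s>j_s$, which is order zero. Uniqueness holds because $t$ is determined by the first disagreement. I would also record that the sorted-sequence description of $>_{\lex}$ (Definition~\ref{df1}) gives $i_{s-t}>j_{s-t}$ at this first disagreement.

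For (ii), I read ``$x_j$'' as the variable $x_{j_\ell}$ occupying a sorted position $\ell\le s-t$ of $M_2$. I would prove the three claims in the following order.

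\emph{Linearity.} $M_3/\gcd(M_3,M_2)=x_{i_{s-t}}$ provided $i_{s-t}\neq j_\ell$, and this holds because $j_\ell\le j_{s-t}<i_{s-t}$.

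\emph{Divisibility.} I need $x_{i_{s-t}}\mid M_1/\gcd(M_1,M_2)$, that is, $\ord_{x_{i_{s-t}}}M_1>\ord_{x_{i_{s-t}}}M_2$. The top $t$ entries of the two sequences coincide. Every remaining entry of $M_2$ is at most $j_{s-t}<i_{s-t}$. Meanwhile $M_1$ has at least one more copy of $i_{s-t}$, at position $s-t$. Hence $M_1$ has strictly more copies of $x_{i_{s-t}}$ than $M_2$.

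\emph{The inequality $M_1>_{\lex}M_3$.} The sorted sequence of $M_3$ agrees with $M_1$ in the top $t$ entries, and its next entry is again $i_{s-t}$. So the comparison reduces to the multiset of the remaining $s-t-1$ entries: for $M_1$ this is $\{i_1,\ldots,i_{s-t-1}\}$, and for $M_3$ it is $\{j_1,\ldots,j_{s-t}\}\setminus\{j_\ell\}$. I would then argue that the first of these dominates the second lexicographically.

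This last step is the one I expect to be the main obstacle. Equality can occur: with $s=3$, $M_1=x_1^2x_3$ and $M_2=x_1^2x_2$, we get $t=0$, and choosing $\ell=3$ gives $M_3=M_1$. So I would first try to prove the non-strict inequality $M_1\ge_{\lex}M_3$, and then isolate exactly when it is strict. That case analysis has to be done carefully, and the statement as worded may need a minor correction there. Alongside it I would record $M_3>_{\lex}M_2$, which is immediate: $M_3$ and $M_2$ agree in the top $t$ entries, and $M_3$ then has $i_{s-t}>j_{s-t}$.

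For (iii), let $\mathcal O$ be the lexicographic ordering on $\mathcal E$. If $M_3\notin\mathcal E$, then $M_3$ cannot work, by the definition in Notation~\ref{not1}. If $M_3\in\mathcal E$, then $M_3>_{\lex}M_2$ means $M_3$ precedes $M_2$ in $\mathcal O$, which is condition (i) of Notation~\ref{not1}. Conditions (ii) and (iii) there are exactly the linearity and divisibility statements proved in part (ii) above.
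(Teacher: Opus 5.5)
Your arguments for (i), for the linearity and divisibility claims in (ii), and for (iii) are correct and match the paper's. You handle an arbitrary position $\ell\le s-t$, whereas the paper only writes out $M_3$ with $x_{j_{s-t}}$ removed; the reasoning is identical because every $j_\ell\le j_{s-t}<i_{s-t}$.

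The step you flag as the main obstacle would fail, and the problem is not only strictness: $M_1\ge_{\lex}M_3$ is false in general. Take $s=3$, $M_1=x_1^2x_3$ and $M_2=x_2^3$. Then $M_1>_{\lex}M_2$, $t=0$, and $i_{s-t}=3$. Every admissible choice of the removed variable gives $M_3=x_2^2x_3$, and $M_3>_{\lex}M_1$. Your own reduction shows why. After the common top $t+1$ entries, you are comparing $(i_1,\ldots,i_{s-t-1})$ with a sub-multiset of $\{j_1,\ldots,j_{s-t}\}$. The hypothesis $M_1>_{\lex}M_2$ says nothing about these lower entries, so restricting to the strict cases cannot rescue the claim.

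The resolution is that ``$M_1>_{\lex}M_3$'' in (ii) is a misprint for $M_3>_{\lex}M_2$. The paper's proof says explicitly ``it remains to prove that $M_3>_{\lex}M_2$'' and proves exactly that. This inequality, not any comparison with $M_1$, is what gives condition (i) of Notation~\ref{not1} in part (iii) and in every later application of the lemma. You already prove $M_3>_{\lex}M_2$ by the same top-down comparison the paper uses. So drop the $M_1$-versus-$M_3$ analysis and take $M_3>_{\lex}M_2$ as the first claim of (ii); with that change your proof is complete.
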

\begin{proof}
    (i) follows from the definition of the agreement in order $t$ and the assumption that $M_1>_{\lex}M_2$. (iii) follows immediately from (ii) and the definition in Notation \ref{not1} of working in $\mathcal{M}$ with respect to $M_1$ and $M_2$. Now we prove (ii).

    Since $i_1\leq \ldots \leq i_s$, $j_1\leq \ldots \leq j_s$, and $i_{s-t}>j_{s-t}$, we have that $i_{s-t}>j_1,\ldots,j_{s-t-1}$ so that $i_{s-t}\notin\{j_1,\ldots,j_{s-t}\}$. Thus $\ord_{x_{i_{s-t}}}M_1>\ord_{x_{i_{s-t}}}M_2$. Consequently
    \begin{equation*}
        x_{i_{s-t}} = M_3/\gcd(M_3,M_2)
    \end{equation*} and
    \begin{equation*}
        x_{i_{s-t}}\mid M_1/\gcd(M_1,M_2).
    \end{equation*} Now it remains to prove that $M_3>_{\lex}M_2$. We have by construction that
    \begin{equation*}
        M_3=x_{i_s}\cdots x_{i_{s-t}}x_{j_{s-t-1}}\cdots x_{j_1}\text{ and }
        M_2=x_{i_s}\cdots x_{i_{s-t+1}}x_{j_{s-t}}x_{j_{s-t-1}}\cdots x_{j_1}
    \end{equation*}and we are done since $i_1\leq \ldots \leq i_s$, $j_1\leq \ldots \leq j_s$, and $i_{s-t}>j_{s-t}$.
\end{proof}

We shall refer to the monomials $M_3$ constructed in Lemma \ref{lem1.8} as \textit{projections of $M_1$ onto $M_2$}. In the proof of Lemma \ref{lem3} we consider several cases depending on the order $t = t(M_1,M_2)$ of agreement. In most of these cases of Lemma \ref{lem3}, we show that some projection $M_3$ of $M_1$ onto $M_2$ works by way of Lemma \ref{lem1.8}(iii) and Remark \ref{rmk1.7}, which we state below.

Recall that we have fixed a positive integer $n\geq 6$, the anticycle of order $n-1$, denoted by $G:=\mathcal{A}_{n-1}$, and the star $F$ on $n$ vertices with edges $\{i,n\}$, $1\leq i\leq n-3$.

\begin{remark}\label{rmk1.7}
    If $\{i_1,i_2\}\in E(G)$ and $i_2\leq j_2<n-1$, then $\{i_1,j_2\}\in E(G)$.
\end{remark}
\begin{proof}
    The statement follows from the definition of the graph $G = \mathcal{A}_{n-1}$.
\end{proof}

The following remark is clear from the definition of the lex ordering of monomials.

\begin{remark}\label{rmk0.1}
    Consider the set $\mathcal{M}$ of (monic) monomials in $S$ equipped with the lexicographical order and let $M,M_1,M_2\in \mathcal{M}$. Then $M_1>_{\lex}M_2$ if and only if $MM_1>_{\lex}MM_2$.
\end{remark}

Now we prove Lemma \ref{lem2}.

\begin{proof}
    Let $\mathcal{O}$ be the set of minimal (monomial) generators of $I_GI_F^2$ equipped with the lexicographical order. For $M,N\in \mathcal{O}$ we shall write $N>M$ to express the statement that $N>_{\lex}M$. Now fix $M_1,M_2\in\mathcal{O}$ such that $M_1>M_2$. We must find $M_3$ which works with respect to $M_1$ and $M_2$. Write $M_1 = \alpha_1x_rx_n$ and $M_2 = \alpha_2x_dx_n$ with $\alpha_1,\alpha_2\in I_GI_F$ and $1\leq r,d\leq n-3$.

Suppose that $r = d$. By Lemma \ref{IGIF-lemma} there exists $\alpha_3\in I_GI_F$ and $t\in\{1,\ldots,n\}$ such that $\alpha_3>\alpha_2$, $x:= \alpha_3/\gcd(\alpha_3,\alpha_2) =x_t$, and
\begin{equation*}
    x_t
    \mid \alpha_1/\gcd(\alpha_1,\alpha_2).
\end{equation*}
Thus $\ord_{x_t}\alpha_1>\ord_{x_t}\alpha_2$, so that
\begin{equation*}
    \ord_{x_t}M_1=\ord_{x_t}\alpha_1x_rx_n>\ord_{x_t}\alpha_2x_rx_n=\ord_{x_t}M_2
\end{equation*}which implies that
\begin{equation*}
    x_t\mid M_1/\gcd(M_1,M_2).
\end{equation*}Thus $M_3:= x_dx_n\alpha_3$ works. So we can assume that $r\neq d$. By a similar argument we can assume that
\begin{equation*}
    \begin{split}
        M_1 &= x_ix_lx_jx_nx_rx_n\\
        M_2 &= x_ax_bx_cx_nx_dx_n 
    \end{split}
\end{equation*}where
\begin{equation}\label{eq4}
    \{j,r\}\cap\{c,d\}=\emptyset
\end{equation}$\{i,l\},\{a,b\}\in E(G)$, $i<l$, $a<b$, $1\leq j,r,c,d\leq n-3$, $j\leq r$, and $c\leq d$. By Lemma \ref{lem1} we can also assume that
\begin{equation}\label{eq5}
    \deg(\gcd(x_ix_lx_jx_r,x_ax_bx_cx_d))\leq 2.
\end{equation}

We introduce some notation before proceeding further with the proof. Let
\begin{equation*}
    \begin{split}
        u &= \max\{i,l,j,r\}\text{ } (= \max\{l,j,r\})\\
        v &= \max\{a,b,c,d\}\text{ } (= \max\{b,c,d\})
    \end{split}
\end{equation*}
Since $M_1>M_2$ we have $u\geq v$. We will now prove the result in cases based on the values of $u$ and $v$ and whether $u>v$ or $u=v$.\\

First we prove the lemma in the case that $u>v$ and $u = l$. Now we have $l\notin \{a,b,c,d,n\}$ so that $x_l\mid M_1/\gcd(M_1,M_2)$. If $(a,l) \neq (1,n-1)$, then $M_3:=\frac{x_ax_l}{x_ax_b}M_2$ works. So we may assume that $a=1$ and $l=n-1$. If $b\neq n-2$ then $M_3:= \frac{x_bx_{n-1}}{x_ax_b}M_2$ works, so we can assume further that $b = n-2$. If $i\neq 1,n-3$, then $M_3:= \frac{x_ix_b}{x_ax_b}M_2$ works, so we can assume that $i\in\{1,n-3\}$. Since $l=n-1$ and $\{i,l\}\in E(\mathcal{A}_{n-1})$ we have $i\neq 1$. So $i=n-3$. Note that $c,d\leq n-3$ since $\{c,n\},\{d,n\}\in E(F)$. If $c,d\neq n-3$ then $c,d<n-3$ so that $M_3:= \frac{x_{n-3}x_n}{x_{\min\{c,d\}}x_n}M_2$ works. Thus one of $c$ or $d$ equals $n-3$. But also $d\geq c$, so $d=n-3$. Now we make a few additional reductions. If $x_{n-3}^2\mid M_1$ then one of $j$ or $r$ equals $n-3$, contradicting (\ref{eq4}). Consequently we have $r,j<n-3$, so that $M_3:= \frac{x_rx_{n-2}}{x_1x_{n-2}}M_2$ works.\\

Now we will prove the result when $u>v$ and $u=r$. We have $r>a,b,c,d$ and $r\notin\{a,b,c,d,n\}$ so that $x_r\mid M_1/\gcd(M_1,M_2)$. Since $b<r\leq n-3$ we have $\{a,r\}\in E(G)$ by Remark \ref{rmk1.7}. Now $M_3:= \frac{x_ax_r}{x_ax_b}M_2$ works by Lemma \ref{lem1.8}(iii).\\

Now we may assume that $u = v$. By (\ref{eq4}) we have $(u,v)\neq (r,d)$. Thus $(u,v)\in \{(l,b),(l,d),(r,b)\}$. We proceed by individually treating these three cases.\\

Suppose that $l = u = v = b$. We have
\begin{equation*}
    \begin{split}
        M_1&= x_ix_lx_jx_nx_rx_n\\
        M_2&= x_ax_lx_cx_nx_dx_n.
    \end{split}
\end{equation*}
Let $p = \max\{i,j,r\}$. $p = \max\{i,r\}$ since $r\geq j$. If $p\notin \{a,b,c\}$ then $p = i$, so that $M_3:= \frac{x_ix_l}{x_ax_l}M_2$ works by Lemma \ref{lem1.8}(iii), or $p = r$ and $M_3:= \frac{x_rx_n}{x_dx_n}M_2$ works. So we can assume that $p\in\{a,b,c\}$. If $p = i$, then $x_r\mid M_1/\gcd(M_1,M_2)$. Consequently $M_3:= \frac{x_rx_n}{x_dx_n}M_2$ works. So we can assume that $p = r$. By (\ref{eq4}) we must have $p = a$. Now
\begin{equation*}
    \begin{split}
        M_1&= x_ix_lx_jx_nx_rx_n\\
        M_2&= x_rx_lx_cx_nx_dx_n.
    \end{split}
\end{equation*}
We have $z:=\max\{i,j\}>\max\{c,d\}$ by (\ref{eq5}). Then $M_3:=\frac{x_zx_n}{x_dx_n}M_2$ works by Lemma \ref{lem1.8}(iii).\\

Now we prove the lemma when $l = u = v = d$. Again let let $p = \max\{i,j,r\}$. If $p\notin\{a,b,c\}$ then $M_3:= \frac{x_px_n}{x_cx_n}M_2$ works by Lemma \ref{lem1.8}(iii). So we may assume that $p\in\{a,b,c\}$. But in this case $M_3:=\frac{x_zx_n}{x_dx_n}M_2$ works by Lemma \ref{lem1.8}(iii), via a similar argument to the $l = u = v = b$ case.

Now it remains to prove the lemma in the case that $r = u = v=b$. Let $p = \max\{l,j,i\}$ ($=\max\{l,j\}$). As before, if $p\notin\{a,b,c\}$ then $M_3:= \frac{x_px_n}{x_cx_n}M_2$ works. Suppose that $p\in \{a,b,c\}$ and let
\begin{equation*}
    z = \begin{cases}
        l \text{ }&\text{if }p = j\\
        j &\text{if } p =l
    \end{cases}
\end{equation*}
Then we have $z >\max\{a,c\}$ so that $M_3:= \frac{x_zx_n}{x_cx_n}M_2$ works by Lemma \ref{lem1.8}(iii) (we have $\{z,n\}\in E(F)$ since $z\leq n-3$). This completes the proof of Lemma \ref{lem2}.

\end{proof}

Now we prove Lemma \ref{lem3}, thereby completing the proof of Theorem \ref{cor1}.

    Let $\mathcal{O}$ be the set of minimal (monomial) generators of $I_G^2I_F$ equipped with the lexicographical order. As before, for $M,N\in \mathcal{O}$ we shall write $N>M$ to express the statement that $N>_{\lex}M$. We begin by establishing notation which will be useful throughout the proof. Let $M_1,M_2\in \mathcal{O}$ such that $M_1>M_2$. Write
    \begin{equation*}
        \begin{split}
            M_1&= x_ix_lx_jx_rx_sx_n\\
            M_2&= x_ax_bx_cx_dx_tx_n
        \end{split}
    \end{equation*}
    where $\{i,l\},\{j,r\},\{a,b\},\{c,d\}\in E(G)$, $i<l$, $j<r$, $a<b$, $c<d$, $l\leq r$, $b\leq d$, $\{s,n\}, \{t,n\}\in E(F)$. Then $1\leq i,j,a,c,s,t\leq n-3$ and $3\leq l,r,b,d\leq n-1$. We make two reductions (equation (\ref{eq6}) and equation (\ref{eq7})) before proceeding with the main force of the proof. By Lemma \ref{lem1} we can assume that
    \begin{equation}\label{eq6}
        \deg(\gcd(x_ix_lx_jx_rx_s,x_ax_bx_cx_dx_t))\leq 3.
    \end{equation}
    By a similar proof to that of (\ref{eq4}) in the proof of Lemma \ref{lem2}, we obtain the following statement
    \begin{equation}\label{eq7}
        \begin{split}
            &\text{Lemma \ref{lem3} holds in the case that}\\
            &(i,l)=(a,b)\text{, }(i,l)=(c,d)\text{, }
            (j,r)=(a,b)\text{, or }(j,r)=(c,d).
        \end{split}
    \end{equation}
Now we divide the proof into eight major cases. Let
\begin{equation*}
    \begin{split}
        u &= \max\{i,l,j,r,s\} = \max\{r,s\}\\
        v &= \max\{a,b,c,d,t\} = \max\{d,t\}.
    \end{split}
\end{equation*}
Since $M_1>M_2$ we have $u\geq v$. We obtain the following eight cases.
\begin{enumerate}
    \item[(I)]
    $r=u>v=d$
    \item[(II)]
    $r=u>v=t$
    \item[(III)]
    $s=u>v=d$
    \item[(IV)]
    $s=u>v=t$
     \item[(V)]
    $r=u=v=d$
    \item[(VI)]
    $r=u=v=t$
    \item[(VII)]
    $s=u=v=d$
    \item[(VIII)]
    $s=u=v=t$
\end{enumerate}
Now we introduce a bit more notation, followed by a remark. Let $p_1\leq p_2\leq p_3\leq p_4\leq p_5=u$ be the (possibly not distinct) elements $i,l,j,r,s$ arranged in ascending order and let $p = p_4$. Let $q_1\leq q_2\leq q_3\leq q_4\leq q_5 = v$ be the elements $a,b,c,d,t$ arranged in ascending order and let $q = q_4$. Let $y = p_3$ and let $z = p_2$. Note that $z\leq y\leq p\leq u$.

The following remark is a consequence of the fact that $M_1>M_2$.
\begin{remark}\label{rmk-p}
    We have the following statements.
    \begin{enumerate}
        \item[(1)] $u=v$ or $u>a,b,c,d,t$.
        \item[(2)] $p\geq q$.
        \item[(3)] $p= q$ or $p> q_1,q_2,q_3,q_4$. 
    \end{enumerate}
\end{remark}

Next we proceed by individually proving each of the eight cases in the order stated above.

\begin{proof}[proof of the $r=u>v=d$ case]

$r>a,b,c,d,t$ by Remark \ref{rmk-p}(1). Consequently, if $(c,r)\neq (1,n-1)$, then $M_3:= \frac{x_cx_r}{x_cx_d}M_2$ works by Lemma \ref{lem1.8}(iii). So we can assume that $c=1$ and $r=n-1$. Similarly we can assume that $a=1$. If $b$ and $d$ do not both equal $n-2$, then $M_3:= \frac{x_{\min\{b,d\}}x_r}{x_1x_{\min\{b,d\}}}M_2$ works by Lemma \ref{lem1.8}(iii). So we can assume that $b=d=n-2$. Thus
\begin{equation*}
    M_3:= \frac{x_{n-1}}{x_{n-2}}M_2
    = x_1x_{n}\cdot x_1x_{n-2}\cdot x_tx_{n-1}
\end{equation*}
works by Lemma \ref{lem1.8}(iii).

\end{proof}

\begin{proof}[proof of the $r=u>v=t$ case]
We have $r>a,b,c,d,t$. Similar to the proof of the $r=u>v=d$ case, we can assume that $r=n-1$, $a=c=1$, and $b=d=n-2$. Thus $M_3:= \frac{x_{n-1}}{x_{n-2}}M_2$ works (by Lemma \ref{lem1.8}(iii)).
\end{proof}

\begin{proof}[proof of the $s=u>v=d$ and $s=u>v=t$ cases]
We have $s>a,b,c,d,t$ by Remark \ref{rmk-p}(1). Then $M_3:= \frac{x_sx_n}{x_tx_n}M_2$ works by Lemma \ref{lem1.8}(iii).
\end{proof}

Before proving the remaining four ($u=v$) cases of Lemma \ref{lem3} we make some comments about the general strategy. Notation \ref{not1} and Lemma \ref{lem1.8} will be used repeatedly throughout the proof.
\begin{comments}
    In the paragraph preceding Remark \ref{rmk-p} we have taken $p$ to be the second largest (accounting for repetition) of the elements $i,l,j,r,s$ and we have taken $q$ to be the second largest of the elements $a,b,c,d,t$. In each of the four $u=v$ cases ((V),\ldots ,(VIII)) we shall first prove the lemma when $p\notin \{q_1,q_2,q_3,q_4=q\}$, or equivalently when $p>q$. In this case we shall find a suitable projection $M_3$ of $M_1$ onto $M_2$. In the case that $p = q$, more effort is required. We then have to compare the third and fourth largest of $i,l,j,r,s$, and those of $a,b,c,d,t$, and we find and $M_3$ that works by application of Lemma \ref{lem1.8}(iii). 
    
    Often we will write our \emph{candidate} in the style $M_3 = (x_{i_1}x_{i_2})(x_{i_3}x_{i_4})(x_{i_5}x_{i_6})$ to emphasize when $x_{i_1}x_{i_2},x_{i_3}x_{i_4}\in I_G$, and $x_{i_5}x_{i_6}\in I_F$ to clarify how $M_3\in I_G^2I_F$.
\end{comments}

\begin{proof}[proof of the $r=u=v=d$ case]
    By construction we have $p = \max\{i,j,s\}$ and $q = \max\{b,c,t\}$. Suppose that $p\notin \{b,c,t\}$. If $(a,l)\neq (1,n-1)$ or $p\neq l$, then by Lemma \ref{lem1.8}(iii) and Remark \ref{rmk1.7} we have that
    \begin{equation*}
        M_3
        :=
        \begin{cases}
            x_sx_nM_2/x_tx_n &\text{ if }p=s\\
            x_jx_rM_2/x_cx_r &\text{ if }p=j\\
            x_ax_lM_2/x_ax_b &\text{ if }p=l
            \text{ and }(a,l)\neq (1,n-1)
        \end{cases}
    \end{equation*}
works. Now suppose further that $(a,l)=(1,n-1)$. If $b\neq n-2$ then $M_3:= \frac{x_{n-1}x_b}{x_ax_b}M_2$ works. So we may assume that $b=n-2$. By (\ref{eq7}) we can assume that $(\dagger)$: $j\neq c$. Since $\{j,n-1\}\in E(\mathcal{A}_{n-1})$ we have $2\leq j\leq n-3$. Thus, if $j\notin\{c,t\}$, then $M_3:= \frac{x_jx_{n-2}}{x_1x_{n-2}}M_2$ works. So we can assume that $j\in \{c,t\}$. Now by $(\dagger)$ we have that $j=t$. But now
\begin{equation*}
    \begin{split}
        M_1 &= x_ix_{n-1}x_jx_{n-1}x_sx_n\\
        M_2 &= x_1x_{n-2}x_jx_{n-2}x_cx_n
    \end{split}
\end{equation*}
and we are done by (\ref{eq7}). Now we have reduced to the case that $p\in\{b,c,t\}$, so assume this.\\

Suppose that $p = c$. By (\ref{eq7}) we can assume that $p\neq j$, so that $p\in \{l,s\}$ and $j<p$. Suppose further that $p=c=l$. By construction, $y = \max\{i,j,s\}$ and
\begin{equation*}
    z=\begin{cases}
        \max\{i,j\} &\text{ if }y = s\\
        \max\{i,s\} &\text{ if }y=j\\
        \max\{j,s\} &\text{ if }y=i
    \end{cases}
\end{equation*}
We have $z\leq y\leq p=c\leq n-3$. If $y\in \{b,c\}$, then $z>q_1,q_2$, due to (\ref{eq6}) and the fact that $M_1>M_2$. Now we have the following remarks.
\begin{enumerate}
    \item[(i)] $z>a$.
    \item[(ii)] If $y = b$, then $z>t$.
    \item[(iii)] If $y = t$, then $z>b$.
\end{enumerate}
Note that by (\ref{eq7}) the lemma holds in the following cases.
\begin{enumerate}
    \item[(1)] $i=y=b$, in which case $M_2 = x_yx_px_ax_rx_tx_n$
    \item[(2)] $j=y=b$, in which case $M_2 = x_ax_px_yx_rx_tx_n$
    \item[(3)] $j=y=t$ or $s=y=t$, in which case $M_2 = x_ax_bx_yx_rx_px_n$. 
\end{enumerate}
So we may assume that none of (1), (2), or (3) hold. Now by Lemma \ref{lem1.8}(iii) and Remark \ref{rmk1.7}, the monomial
\begin{equation*}
    M_3:=\begin{cases}
        \frac{x_yx_n}{x_tx_n}M_2 &\text{ if }y\in\{b,c\}\\
        \frac{x_i}{x_a}M_2 = (x_ix_p)(x_tx_r)(x_yx_n)
        &\text{ if } s=y=b\text{ and }z=i\\
        \frac{x_j}{x_t}M_2
        = (x_ax_p)(x_jx_r)(x_yx_n) &\text{ if }
        s=y=b\text{ and }z = j\\
        \frac{x_ax_z}{x_ax_b}M_2&\text{ if }
        i=y=t
    \end{cases}
\end{equation*}
works. This completes our proof in the case that $c=p=l$.\\

If $c=p=s$, then $M_1 = x_ix_l\cdot x_px_r \cdot x_jx_n$ and $M_2 = x_ax_bx_px_rx_tx_n$ and we are done by (\ref{eq7}). Suppose that $p = b= l$. We have, $y = \max\{i,j,s\}$,
\begin{equation*}
    z=\begin{cases}
        \max\{i,j\} &\text{ if }y = s\\
        \max\{i,s\} &\text{ if }y=j\\
        \max\{j,s\} &\text{ if }y=i
    \end{cases}
\end{equation*}
and $z\leq y\leq n-3$, since $i,j,s\leq n-3$. We have the following remarks.
\begin{enumerate}
    \item[(i)] $y\in \{a,c,t\}$ or $y>a,c,t$.
    \item[(ii)] If $y = a$, then $z>c,t$.
    \item[(iii)] If $y = c$, then $z>a,t$.
    \item[(iv)] If $y = t$, then $z>a,c$. 
\end{enumerate}
By (\ref{eq7}), we have $i\neq a$ and $j\neq c$, and we are done if $i=y=t$ or $j=y=t$. So we can assume that neither of these cases hold. Using the above statements (i), (ii), (iii), (iv), Lemma \ref{lem1.8}(iii), and Remark \ref{rmk1.7}, we see that the following monomial
\begin{equation*}
    M_3:=\begin{cases}
        x_ix_pM_2/x_ax_p &\text{ if }i=y\notin\{a,c,t\} \text{ or [$s=y=t$ and $z=i$]}\\
        &\text{ or [$s=y=c$ and $z=i$]}\\
        x_jx_rM_2/x_cx_r &\text{ if }j=y\notin\{a,c,t\} \text{ or [$s=y=t$ and $z=j$]}\\
        &\text{ or [$s=y=a$ and $z=j$]}\\
        x_sx_nM_2/ x_tx_n &\text{ if }s=y\notin\{a,c,t\} \text{ or [$j=y=a$ and $z=s$]}\\
        x_zx_nM_2/x_tx_n &\text{ if } [s=y=c \text{ and }z=j]
        \text{ or [$s=y=a$ and $z=i$]}\\
        x_zx_rM_2/x_cx_r &\text{ if } j=y=a \text{ and }z=i\\
        x_zx_nM_2/x_tx_n &\text{ if } y=i=c
    \end{cases}
\end{equation*}
works.\\

Suppose that $y=p=b$. Since $i<l$, we have $y =\max\{i,l,s\}= \max\{l,s\}$ and
\begin{equation*}
    z=\begin{cases}
        \max\{i,l\}=l &\text{ if }y=s\\
        \max\{i,s\} &\text{ if }y=l
    \end{cases}
\end{equation*}
We have either $z\in \{i,s\}$ or $z \leq y = s$, while $s\leq n-3$. Thus $z\leq n-3$. By (\ref{eq7}) the lemma holds if $s=y=t$. So we can assume that $s$ or $t$ does not equal $y$. Now by Lemma \ref{lem1.8}(iii) and Remark \ref{rmk1.7}, the monomial

\begin{equation*}
    M_3:=
    \begin{cases}
        x_yx_nM_2/x_tx_n &\text{ if }y\notin\{a,c,t\}\\
        x_zx_nM_2/x_tx_n &\text{ if }y\in\{a,c\}\\
        x_zx_rM_2/x_cx_r &\text{ if }l=y=t\text{ and }z=i\\
        x_zx_nM_2/x_cx_n &\text{ if }l=y=t \text{ and }z=s\\
    \end{cases}
\end{equation*}
works (e.g. to see the $y\notin \{a,c,t\}$ case, we have $y\leq p\leq u$ implies that $\ord_{x_y}M_3 \geq 3$ and $\ord_{x_y}(M_2)\leq 2$ so that $x_y\mid M_1/\gcd(M_1,M_2)$).\\

Suppose that $s=p=b$. We have $y = \max\{l,j\}$ and
\begin{equation*}
    z = \begin{cases}
        l &\text{ if }y=j\\
        \max\{i,j\} &\text{ if }y=l
    \end{cases}
\end{equation*}
We have $z \leq n-3$. Additionally, $y\in \{a,c,t\}$ or $y>a,c,t$. If $y = a$ then $z>c,t$, if $y = c$ then $z>a,t$, and if $y=t$ then $z>a,c$. Since $y\leq p=s\leq n-3$ we have that $\{y,n\}\in E(F)$. Similarly $\{z,n\}\in E(F)$. By (\ref{eq7}) we can assume that $j$ and $t$ do not both equal $y$. Then by Lemma \ref{lem1.8}(iii) and Remark \ref{rmk1.7} we have that
\begin{equation*}
    M_3:=
    \begin{cases}
        x_yx_nM_2/x_tx_n &\text{ if }y\notin\{a,c,t\}\\
        x_zx_nM_2/x_tx_n &\text{ if }y\in\{a,c\}\\
        x_zx_pM_2/x_ax_p &\text{ if }l=y=t\text{ and }z=i\\
        x_zx_rM_2/x_cx_r &\text{ if } l=y=t\text{ and }z=j
    \end{cases}
\end{equation*}
works.\\

Suppose that $p = t= l$. Then $y = \max\{i,j,s\}$,
\begin{equation*}
    z =
    \begin{cases}
        \max\{i,j\}&\text{ if }y=s\\
        \max\{i,s\}&\text{ if }y=j\\
        \max\{j,s\}&\text{ if }y=i
    \end{cases}
\end{equation*}
and $z\leq y\leq p\leq n-3$ (since $t\leq n-3$). We also see that $z>a$, $y\in \{b,c\}$ or $y>b,c$, if $y=b$ then $z>c$, and if $y = c$ then $z>b$. Then by Lemma \ref{lem1.8}(iii) and Remark \ref{rmk1.7} we find that the monomial

\begin{equation*}
    M_3:=
    \begin{cases}
        \frac{x_y}{x_b}M_2 = (x_ax_p)(x_cx_r)(x_bx_n)
        &\text{ if }y\notin\{b,c\}\\
        \frac{x_i}{x_a}M_2 = (x_ix_p)(x_cx_r)(x_bx_n)
        &\text{ if }i=y=b\\
        \frac{x_j}{x_a}M_2 = (x_ax_p)(x_jx_r)(x_bx_n)
        &\text{ if }j=y=b\\
        \frac{x_z}{x_a}M_2 = (x_ix_p)(x_cx_r)(x_yx_n)
        &\text{ if } s=y=b \text{ and }z=i\\
        \frac{x_z}{x_c}M_2 = (x_ax_p)(x_jx_r)(x_yx_n)
        &\text{ if }s=y=b \text{ and }z=j\\
        \frac{x_zx_n}{x_bx_n}M_2
        &\text{ if }y= c
    \end{cases}
\end{equation*}
works.\\

If $p=j=t$, then
\begin{equation*}
    \begin{split}
        M_1&= x_ix_lx_px_rx_sx_n\\
        M_2&= x_ax_bx_px_rx_cx_n
    \end{split}
\end{equation*}
and we are done by (\ref{eq7}). Suppose that $p=t=s$. Then $z\leq y\leq p\leq n-3$, $y = \max\{j,l\}$, and $z = \begin{cases}l &\text{ if }y=j\\ \max\{i,j\}&\text{ if }y=l \end{cases}$. In addition, $z>a$, $y\in \{b,c\}$ or $y>b,c$, if $y = b$ then $z>c$, and if $y = c$ then $z>b$. We can assume that $j$ and $b$ do not both equal $y$ by (\ref{eq7}). These facts combined with Lemma \ref{lem1.8}(iii) and Remark \ref{rmk1.7} show that
\begin{equation*}
    M_3:=
    \begin{cases}
        x_yx_nM_2/x_bx_n &\text{ if }y\notin\{b,c\}\\
        x_zx_nM_2/x_bx_n &\text{ if }y=c\\
        \frac{x_j}{x_c}M_2 = (x_ax_p)(x_jx_r)(x_yx_n)
        &\text{ if } l=y=b\text{ and }z=j\\
        \frac{x_i}{x_a}M_2 = (x_ix_p)(x_cx_r)(x_yx_n)
        &\text{ if }l=y=b \text{ and }z=i
    \end{cases}
\end{equation*}
works. This completes the proof of Lemma \ref{lem3} in the case that $r=u=v=d$.
\end{proof}

\begin{proof}[proof of the $r=u=v=t$ case] Recall that $z$, $y$, $p$, and $u$ are resp. the fourth largest, third largest, second largest, and largest indices $i$ such that $x_i\mid M_1$ (besides $n$), and $v$ is the largest index $i$ (besides $n$) such that $x_i\mid M_2$.

    We have $p = \max\{l,j,s\}\geq \max\{a,b,c,d\} = d$ by construction. Suppose that $p\neq d$. Then $p>a,b,c,d$ by Remark \ref{rmk-p}(3). Consequently, if $p = s$, then $M_3:= \frac{x_cx_p}{x_cx_d}M_2$ works. If $p =j$ or [$p=l$ and $(a,l)\neq(1,n-1)$], then $p\leq n-3$ so that $M_3:= \frac{x_ax_p}{x_ax_b}M_2$ works by Lemma \ref{lem1.8}(iii) and Remark \ref{rmk1.7}. Suppose that $p=l$ and $(a,l)=(1,n-1)$. If $b$ does not equal $n-2$, then $M_3:= \frac{x_{b}x_{n-1}}{x_ax_b}M_2$ works. If $d$ does not equal $n-2$ then $M_3:= \frac{x_dx_{n-1}}{x_cx_d}M_2$ works. If $c\neq 1$, then $M_3:= \frac{x_cx_{n-1}}{x_cx_d}M_2$ works. If $b=d=n-2$ and $a=c=1$, then $x_{n-1}\mid M_1/\gcd(M_1,M_2)$ so that
    \begin{equation*}
        M_3:=\frac{x_{n-1}}{x_{n-2}}M_2
        = x_1x_{n-2}x_tx_{n-1}x_1x_n
    \end{equation*}works.\\

    Now we may assume that $p=d$. Suppose that $p = s$. Then $y = \max\{l,j\}$ and
    \begin{equation*}
        z = 
        \begin{cases}
            \max\{i,j\} &\text{ if }y=l\\
            l &\text{ if } y=j
        \end{cases}
    \end{equation*}

 Then we have $z\leq n-3$, $z>a$, if $y=b$ then $z>c$, and if $y = c$ then $z>b$. By (\ref{eq7}) and the fact that $y\in \{l,j\}$, we can assume that none of the following statements hold.
 \begin{enumerate}
     \item[(1)] $y=b=j$.
     \item[(2)] $y = c$.
 \end{enumerate}
 Consequently, the monomial
 \begin{equation*}
     M_3:=
     \begin{cases}
     x_ax_yM_2/x_ax_b &\text{ if }y\notin\{b,c\}\\
         x_ix_bM_2/x_ax_b &\text{ if }y=b=l \text{ and }z=i\\
         \frac{x_z}{x_c}M_2 &\text{ if } y=b=l\text{ and }z=j
     \end{cases}
 \end{equation*}
 works (By Lemma \ref{lem1.8}(iii) and Remark \ref{rmk1.7}).\\

Suppose that $p = l$. Then $z\leq y\leq p\leq n-3$, $y = \max\{i,j,s\}$, and
\begin{equation*}
    z =
    \begin{cases}
        \max\{i,j\} &\text{ if }y=s\\
        \max\{i,s\} &\text{ if }y=j\\
        \max\{j,s\} &\text{ if }y=i
    \end{cases}
\end{equation*}
Moreover, $z>a$, $y\in\{b,c\}$ or $y>b,c$, if $y = b$ then $z>c$, and if $y = c$ then $z>b$. By (\ref{eq7}) we can assume that the statements $y=b=j$, $c=y=i$, and $y=c=j$ do not hold. Now
\begin{equation*}
    M_3:=
    \begin{cases}
    x_ax_yM_2/x_ax_b &\text{ if }y\notin\{b,c\}\\
        \frac{x_z}{x_c}M_2 = (x_yx_p)(x_ax_r)(x_zx_n) &\text{ if } y=b=i\\
        \frac{x_z}{x_a}M_2 = (x_zx_r)(x_cx_p)(x_yx_n) &\text{ if } y=b=s\\
        \frac{x_z}{x_a}M_2 = (x_zx_r)(x_yx_p)(x_bx_n) &\text{ if }y=c=s
    \end{cases}
\end{equation*}
works.\\

Suppose that $p=j$. Then $z\leq y\leq p\leq n-3$, $y = \max\{l,s\}$, and
\begin{equation*}
    z=
    \begin{cases}
        \max\{i,s\} &\text{ if }y=l\\
        l &\text{ if }y=s
    \end{cases}
\end{equation*}
Furthermore, $z>a$, $y\in\{b,c\}$ or $y>b,c$, if $y = b$ then $z>c$, and if $y = c$ then $z>b$. By (\ref{eq7}) we can assume that $y\notin\{b,c\}$ (we see that (\ref{eq7}) implies the lemma in each case $l=y=b$, $s=y=b$, $l=y=c$, $s=y=c$). Consequently, $M_3: = \frac{x_ax_y}{x_ax_b}M_2$ works by Remark \ref{rmk1.7} and Lemma \ref{lem1.8}(iii). This completes the proof of the lemma in the case that $r=u=v=t$.
 \end{proof}

\begin{proof}[proof of the $s=u=v=d$ case]
We see that $r = p$ in this case (by construction of $r$). In addition, $z\leq y\leq p=r\leq s\leq n-3$. Thus $\{r,n\}\in E(F)$. If $r\notin \{b,c,t\}$ then $M_3:= \frac{x_rx_n}{x_tx_n}M_3$ works by Lemma \ref{lem1.8}(iii).\\

So we can assume that $r\in\{b,c,t\}$. Note that $y = \max\{j,l\}$ and
\begin{equation*}
    z =
    \begin{cases}
        l &\text{ if }y=j\\
        \max\{i,j\} &\text{ if }y =l
    \end{cases}
\end{equation*}

Suppose that $r = b$. Then $y\in\{a,c,t\}$ or $y>a,c,t$, if $y=a$ then $z>c,t$, if $y=c$ then $z>a,t$, and if $y=t$ then $z>a,c$. By (\ref{eq7}) we can assume that $t$ and $j$ do not both equal $y$. Now we find that the monomial

\begin{equation*}
    M_3:=
    \begin{cases}
        x_yx_nM_2/x_tx_n &\text{ if }y\notin\{a,c,t\}\\
        x_zx_nM_2/x_tx_n &\text{ if }y\in\{a,c\}\\
        x_jx_rM_2/x_ax_r &\text{ if }l=j=t\text{ and }z=j\\
        \frac{x_i}{x_a}M_2
        =(x_ix_y)(x_cx_s)(x_rx_n) &\text{ if } l=y=t \text{ and }z=i
    \end{cases}
\end{equation*}
works.\\

If $r = c$, then
\begin{equation*}
    \begin{split}
        M_1 &= x_ix_l\cdot x_rx_s\cdot x_jx_n\\
        M_2 &= x_ax_b\cdot x_rx_s\cdot x_bx_n
    \end{split}
\end{equation*}
so that we are done by (\ref{eq7}). We may now assume that $r=t$. Now we have $z>a$, $y\in \{b,c\}$ or $y>b,c$, if $y=b$ then $z>c$, and if $y=c$ then $z>b$. By (\ref{eq7}) we can assume that none of $j=y=b$, $l=y=c$, or $j=y=c$ hold. Then we claim that
\begin{equation*}
    M_3:=
    \begin{cases}
        x_ax_yM_2/x_ax_b &\text{ if }y\notin\{b,c\}\\
        x_ix_yM_2/x_ax_y &\text{ if }l=y=b \text{ and }z=i\\
        x_jx_sM_2/x_cx_s &\text{ if }l=y=b\text{ and }z=j
    \end{cases}
\end{equation*}
works, we give a brief proof that $(\dagger):$ $M_3:= x_ax_yM_2/x_ax_b$ works if $y\notin\{b,c\}$ now.
\begin{proof}[proof of $(\dagger)$]
    Suppose that $y\neq \{b,c\}$. Recall that we have reduced to the case that $y=\max\{j,l\}$, $r=t$, and now we also have $y>a,b,c$ ($y>b,c$ by the preceding paragraph, while $b>a$). We will show that $\ord_{x_y}M_1>\ord_{x_y}M_2$, which clearly holds if $x_y\nmid M_2$, since $x_y\mid M_1$. So we may assume that $x_y\mid M_2$, i.e. $y\in\{a,b,c,d,t\}$, and thus $y\in \{d,t\} = \{s,t\}$.

    First we address the case when $y = s$. So $y = s= d$. But $y\leq p\leq u$, (by construction of $y$, $p$, and $u$) so we must have $\ord_{x_y} (M_1)\geq 3$. But $y>a,b,c$ and $y\neq n$ so the largest $\ord_{x_y}(M_2)$ could be is 2.

    Now we can assume that $y\neq s$. Since $y\in\{t,s\}$ and $y\neq s$, we have $y = t< s$ (since $s = v$). Now $y = r = t$ while we have $y = \max\{j,l\}$, so that $\ord_{x_y}(M_1)\geq 2$. On the other hand, $y\neq s = d$ and $y<n$, and $y>a,b,c$ so $ord_{x_y}(M_2) = 1$.
\end{proof}

This completes the proof of the $s=u=v=d$ case.
\end{proof}

\begin{proof}[proof of the $s=u=v=t$ case]
We have $r = p\geq q = d$. Suppose that $r>d$. We have $r\leq u = s\leq n-3$, and in particular $r\neq n-1$. Then $M_3:=\frac{x_cx_r}{x_cx_d}M_2$ works by Remark \ref{rmk1.7} (and Lemma \ref{lem1.8}(iii)). Hence we can assume that $r= d$. Now $y = \max\{l,j\}\geq \max\{a,b,c\}$. Suppose that $y\notin\{a,b,c\}$. If $y = j$, then $M_3:= \frac{x_jx_r}{x_cx_r}M_2$ works. If $y = l$, then $M_3:= \frac{x_ax_l}{x_ax_b}M_2$ works. So we can assume that $y\in\{a,b,c\}$. Since $y = \max\{a,b,c\}$ we must have $y\neq a$, and so $y\in\{b,c\}$. To review, we have reduced to the case that $r=p=d$ and $y\in\{b,c\}$.\\

Suppose that $y = c$. Then $z>b$, so that $M_3: = \frac{x_ax_z}{x_ax_b}M_2$ works. Thus we may assume that $y = b$. Suppose that $y=j$. In this case we have $l = z>c$. Since $\{j,r\}\in E(G)$, $l\leq j$, and $r\leq s\leq n-3$, we have that $\{l,r\}\in E(G)$ by Remark \ref{rmk1.7}. This combined with the fact that $l>c$ yields that $M_3:= \frac{x_lx_r}{x_cx_r}M_2$ works. Now we have reduced to the case that $b=y = l$. Then
\begin{equation*}
    M_3:=
    \begin{cases}
        x_ix_yM_2/x_ax_y &\text{ if }z=i\\
        x_jx_rM_2/x_cx_r &\text{ if }z=j
    \end{cases}
\end{equation*}
works. This completes the proof of the lemma in the case that $s=u=v=t$.
\end{proof}
This completes the proof of Lemma \ref{lem3}, and hence the proof of Theorem \ref{cor1} is complete.

\section*{Acknowledgments}

The author would like to thank the Center for Mathematical Sciences and Applications at Harvard and the Hebrew University of Jerusalem for their support during this project. The author would also like to thank Eran Nevo for his helpful discussion on linear quotient orderings. The author would like to thank Adam Van Tuyl for testing some examples of linear quotient orderings in Macaulay2.

\bibliographystyle{amsplain}

\bibliography{biblio}

\end{document}